\documentclass[12pt]{article}
\usepackage[english]{babel}
\usepackage{ucs}
\usepackage[latin1]{inputenc}
\usepackage[T1]{fontenc}
\usepackage{amsmath,amsfonts,amsthm,enumerate}
\usepackage{amssymb}
\usepackage{geometry}
\usepackage{verbatim}
\usepackage{graphicx}
\usepackage[all]{xy}
\usepackage{bm}
\usepackage{pdfsync}
\usepackage{tabularx}
\usepackage[toc,page]{appendix}
\usepackage[babel=true]{csquotes}
\usepackage{xcolor}
\usepackage{textcomp}
\usepackage{cancel}
\usepackage{float}

\newcommand*{\leftmapsto}{\mathbin{\reflectbox{$\mapsto$}}}

\setcounter{secnumdepth}{3}

\newtheoremstyle{def}{2em}{2em}{\normalfont}{}{\bfseries}{}{0.5em}{}
\newtheoremstyle{th}{2em}{1em}{\itshape}{}{\bfseries}{}{0.5em}{}
\newtheoremstyle{rem}{2em}{2em}{\normalfont}{}{\bfseries}{~:}{0.5em}{}

\theoremstyle{def}
\newtheorem{defin}{Definition}

\theoremstyle{th}
\newtheorem{prop}{Proposition}
\newtheorem{theo}{Theorem}
\newtheorem*{theo*}{Theorem}

\newtheorem{cor}{Corollary}

\theoremstyle{rem}
\newtheorem{rem}{Remark}

\geometry{hmargin=2.5cm, vmargin=1.4cm}

\begin{document}
\allowdisplaybreaks


\begin{center}
\begin{LARGE} Constructions of minimal periodic surfaces and minimal annuli in Sol$_{3}$ \end{LARGE}

\vspace{2\baselineskip}

Christophe \textsc{Desmonts}
\end{center}

\begin{abstract}
We construct two one-parameter families of minimal properly embedded surfaces in the Lie group Sol$_{3}$ using a Weierstrass-type representation. These surfaces are not invariant by a one-parameter group of ambient isometries. The first one can be viewed as a family of helicoids, and the second one is a family of minimal annuli called catenoids. Finally we study limits of these catenoids, and in particular we show that one of these limits is a new minimal entire graph.
\end{abstract}

\section{Introduction}

The aim of this paper is to construct two one-parameter families of examples of properly embedded minimal surfaces in the Lie group Sol$_{3}$, endowed with its standard metric. This group is a homogeneous Riemannian manifold with a $3$-dimensional isometry group and is one of the eight Thurston's geometries. There is no rotation in Sol$_{3}$, and so no surface of revolution.

The Hopf differential, which exists on surfaces in every $3$-dimensional space form, has been generalized by Abresch and Rosenberg to every $3$-dimensional homogeneous Riemannian manifolds with $4$-dimensional isometry group (see \cite{AbreschRosenberg1} and \cite{AbreschRosenberg2}). This tool leads to a lot of researches in the field of CMC surfaces in Nil$_{3}$, $\widetilde{PSL_{2}}(\mathbb{R})$ and in the Berger spheres. More precisely, Abresch and Rosenberg proved that the generalized Hopf differential exists in a simply connected Riemannian 3-manifold if and only if its isometry group has at least dimension $4$ (see \cite{AbreschRosenberg2}).

\begin{sloppypar}
Berdinskii and Taimanov gave a representation formula for minimal surfaces in $3$-dimensional Lie groups in terms of spinors, but they pointed out some difficulties for using this theory in the case of Sol$_{3}$ (see \cite{BerdinskiiTaimanov}). Nevertheless, Daniel and Mira mentioned some basic examples of minimal graphs in Sol$_{3}$ : $x_{1} = ax_{2} + b$, $x_{1} = ae^{-x_{3}}$, $x_{1} = ax_{2}e^{-x_{3}}$ and $x_{1} = x_{2}e^{-2x_{3}}$ (and their images by isometries of Sol$_{3}$ : see \cite{DanielMira}). Finally, Ana Menezes constructed singly and doubly periodic Scherk minimal surfaces in Nil$_{3}$ and Sol$_{3}$ (see \cite{Menezes}).
\end{sloppypar}

Thus, the method that we use in this paper is the one used by Daniel and Hauswirth (see \cite{DanielHauswirth}) in Nil$_{3}$ to construct minimal embedded annuli : we construct a first one-parameter family of embedded minimal surfaces called \emph{helicoids} and we calculate its Gauss map $g$. A result of Inoguchi and Lee (see \cite{Inoguchi}) claims that this map is harmonic for a certain metric on $\overline{\mathbb{C}}$. Then we seek another family of maps $g$ with separated variables that still satisfies the harmonic equation, and we use a Weierstrass type representation given by Inoguchi and Lee to construct a minimal immersion whose Gauss map is $g$. We prove that these immersions are periodic, so we get minimal annuli. As far as I know, these annuli are the first examples of non simply connected surfaces with finite topology (that is, diffeomorphic to a compact surface without a finite number of points) in Sol$_{3}$.

The model we use for Sol$_{3}$ is described in Section $2$. In the third section we give some properties of the Gauss map of a conformal minimal immersion in Sol$_{3}$ (see \cite {DanielMira}). In the fourth section we construct the family $(\mathcal{H}_{K})_{K \in ]-1;1[}$ of helicoids, and finally we construct the family $(\mathcal{C}_{\alpha})_{\alpha \in ]-1;1[ \backslash \{ 0 \}}$ of embedded minimal annuli. The study of the limit case of the parameter of this family gives an other example of minimal surface in Sol$_{3}$, which is an entire graph. None of these surfaces is invariant by a one-parameter family of isometries.

\begin{theo*}
There exists a one-parameter family $(\mathcal{C}_{\alpha})_{\alpha \in ]-1;1[ \backslash \{ 0 \}}$ of properly embedded minimal annuli in \emph{Sol}$_{3}$, called \emph{catenoids}, having the following properties :
\begin{enumerate}
\item The intersection of $\mathcal{C}_{\alpha}$ and any plane $\{ x_{3} = \lambda \}$ is a non-empty closed embedded convex curve ;
\item The annulus $\mathcal{C}_{\alpha}$ is conformally equivalent to $\mathbb{C}~\backslash \{ 0 \}$ ;
\item The annulus $\mathcal{C}_{\alpha}$ has $3$ symmetries fixing the origin :
\begin{itemize}
\item rotation of angle $\pi$ around the $x_{3}$-axis ;
\item reflection with respect to $\{ x_{1} = 0 \}$ ;
\item reflection with respect to $\{ x_{2} = 0 \}$.
\end{itemize}
\end{enumerate}
\end{theo*}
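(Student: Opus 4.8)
The plan is to produce the catenoids by the same scheme as the one used for the helicoids: first exhibit a one-parameter family of Gauss maps $g_{\alpha}$ solving the relevant harmonic map equation on $\overline{\mathbb{C}}$, then integrate the Inoguchi--Lee Weierstrass-type data to obtain conformal minimal immersions $X_{\alpha}$, and finally read off the stated geometry from the explicit formulas.

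\textbf{Step 1 (the Gauss map).} Following the helicoid computation of Section $4$, I would write the harmonic map equation for $g$ in a conformal coordinate $z=u+iv$ and look for solutions whose modulus and argument separate, e.g. $g_{\alpha}$ depending on $v$ through a single real profile function and on $u$ only through a phase $e^{iu}$ (up to reparametrization). Substituting such an ansatz should reduce the PDE to a second-order autonomous ODE for the profile; a first integral of that ODE, depending on a constant of integration, yields the family, and one then identifies the range of that constant --- repackaged as $\alpha\in\,]-1;1[\,\backslash\{0\}$ --- for which the profile is defined on all of $\mathbb{R}$ with the correct asymptotics at $\pm\infty$, the value $\alpha=0$ and the endpoints $\alpha=\pm1$ being the degenerate limits.

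\textbf{Step 2 (integration and the period problem).} Plugging $g_{\alpha}$ into the Inoguchi--Lee representation gives the $(1,0)$-forms whose real periods are $dx_{1},dx_{2},dx_{3}$. The $u$-periodicity built into $g_{\alpha}$ makes the natural domain a cylinder $\mathbb{R}\times(\mathbb{R}/2\pi\mathbb{Z})\cong\mathbb{C}\,\backslash\{0\}$, so one must check that the three period integrals over the generating loop vanish; here the parity of $g_{\alpha}$ under the involutions $z\mapsto-z$, $z\mapsto\bar z$, $z\mapsto-\bar z$ forces the integrands to be odd and the loop integrals to cancel, which gives assertion $2$ and prepares assertion $3$. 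Integrating then yields $X_{\alpha}=(x_{1},x_{2},x_{3})$ in closed form, with $x_{3}$ an affine function of the radial variable $v$, so each plane $\{x_{3}=\lambda\}$ meets $\mathcal{C}_{\alpha}$ in the image of one loop $u\mapsto(x_{1},x_{2})(u,v_{\lambda})$.

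\textbf{Step 3 (convexity, embeddedness, symmetries).} For assertion $1$ I would compute the planar curvature of the slice curve $u\mapsto(x_{1},x_{2})(u,v_{\lambda})$ from the explicit formulas and show it keeps a constant sign, so each slice is a convex Jordan curve; since distinct slices lie in distinct planes, $\mathcal{C}_{\alpha}$ is embedded, and properness follows from the behaviour of $x_{3}$ and of the slice diameters as $v\to\pm\infty$, i.e.\ at the two ends of $\mathbb{C}\,\backslash\{0\}$. Assertion $3$ is obtained by checking that the three isometries of Sol$_{3}$ listed correspond under $X_{\alpha}$ to the coordinate involutions $z\mapsto-z$, $z\mapsto\bar z$, $z\mapsto-\bar z$ of the domain, which is immediate from the parity noted in Step $2$; that each fixes the origin is the normalization $X_{\alpha}$ at the base point $= 0$. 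The main obstacle I expect is twofold: solving the separated-variable ODE explicitly and pinning down exactly the interval $]-1;1[\,\backslash\{0\}$ with the right asymptotics at both ends, and then extracting the sign of the planar curvature of the horizontal slices --- since Sol$_{3}$ has no rotational symmetry, the induced metric on a slice is not round and convexity is not automatic, so it must be read off the explicit but not obviously signed expressions produced by the Weierstrass integration.
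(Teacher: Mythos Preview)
Your overall architecture is right, but several concrete expectations are off in ways that would block the argument. First, the separated-variable ansatz that actually works is not ``modulus in $v$, phase $e^{iu}$'': in the paper the Gauss map is $g(u+iv)=-ie^{-u-\gamma(v)}e^{i\rho(v)}$, so the \emph{argument} depends only on $v$ while the \emph{modulus} depends on both variables through an auxiliary function $\gamma$ determined by $\gamma'=-\alpha\sin(2\rho)$, with $\rho'=\sqrt{1-\alpha^{2}\sin^{2}(2\rho)}$. Without that extra $\gamma$-factor the harmonic equation \eqref{eq6} does not close up to a single ODE. Consequently $x_{3}$ is \emph{not} affine in one variable: one finds $x_{3}(u+iv)=\alpha u+F(v)$ with $F$ non-constant. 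Hence the level set $\{x_{3}=\lambda\}$ is not a coordinate loop but the image of the curve $t\mapsto\big((\lambda-F(t))/\alpha,\,t\big)$; convexity is obtained by computing $\mathfrak c_{1}'$ and $\mathfrak c_{2}'$ along that curve and observing the clean identity $d\mathfrak c_{2}/d\mathfrak c_{1}=\tan\rho(t)$, which is monotone on a half-period --- not by a generic planar-curvature sign check.

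Second, the period problem is not settled by parity under $z\mapsto -z,\bar z,-\bar z$. The periodic variable is $v$, with a period $2V$ depending on $\alpha$, and closing the immersion amounts to proving that $F$ and $\gamma$ are $V$-periodic. This is the heart of the matter: one must show $\int_{0}^{V}F'\,dv=0$, which the paper does by the substitution $u=\sin(2\rho(s))$ turning the integrand into an odd function of $u$ --- a genuine computation, not formal symmetry. Relatedly, the three symmetries do not all correspond to the involutions you list: the rotation $\sigma^{2}$ about the $x_{3}$-axis comes from the half-period translation $v\mapsto v+V$ (i.e.\ $z\mapsto z+iV$), the reflection in $\{x_{1}=0\}$ from $v\mapsto -v$, and the reflection in $\{x_{2}=0\}$ from $v\mapsto -v+V$. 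So both the period argument and the symmetry identification hinge on the non-obvious periodicity of $F$ and $\gamma$, which your outline does not anticipate.
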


\section{The Lie group Sol$_{3}$}

\begin{defin}
 The Lie group Sol$_{3}$ is the set $\mathbb{R}^{3}$ with the multiplication $\ast$ defined by

$$(x_{1},x_{2},x_{3}) \ast (y_{1},y_{2},y_{3}) = (y_{1}e^{-x_{3}} + x_{1},y_{2}e^{x_{3}} + x_{2},x_{3}+y_{3})$$
for all $(x_{1},x_{2},x_{3}), (y_{1},y_{2},y_{3}) \in \mathbb{R}^{3}$. The identity element for this law is $0$ and the inverse element of $(x_{1},x_{2},x_{3})$ is $(x_{1},x_{2},x_{3})^{-1} = (-x_{1}e^{x_{3}},-x_{2}e^{-x_{3}},-x_{3})$. The law is not commutative.
\end{defin}

The left multiplication $l_{a}$ by an element $a = (a_{1},a_{2},a_{3}) \in \mathbb{R}^{3}$ is given for all $x = (x_{1},x_{2},x_{3}) \in \mathbb{R}^{3}$ by

$$\begin{array}{rcl}
   l_{a}(x) = a \ast x & = & (x_{1}e^{-a_{3}} + a_{1},x_{2}e^{a_{3}} + a_{2},a_{3}+x_{3}) \\
		       		   & = & a + M_{a}x,
  \end{array}$$
where

$$M_{a} = \begin{pmatrix}
      e^{-a_{3}} & 0 & 0 \\
      0 & e^{a_{3}} & 0 \\
      0 & 0 & 1
      \end{pmatrix}.$$
For the metric $(\cdot,\cdot)$ on Sol$_{3}$ to be left-invariant, it has to satisfy

$$(M_{a}X,M_{a}Y)_{a \ast x} = (X,Y)_{x}$$
for all $a, x, X, Y \in \mathbb{R}^{3}$. We define a left-invariant riemannian metric for $x, X, Y \in \mathbb{R}^{3}$ by the formula

\begin{eqnarray}
 (X,Y)_{x} = \left\langle M_{x^{-1}}X,M_{x^{-1}}Y \right\rangle, \label{eq1}
\end{eqnarray}
where $\langle \cdot,\cdot \rangle$ is the canonical scalar product on $\mathbb{R}^{3}$ and $x^{-1}$ is the inverse element of $x$ in Sol$_{3}$. The formula \eqref{eq1} leads to the following expression of the previous metric

\begin{eqnarray}
 ds^{2}_{x} = e^{2x_{3}} dx_{1}^{2} + e^{-2x_{3}} dx_{2}^{2} + dx_{3}^{2}, \label{eq2}
\end{eqnarray}
where $(x_{1},x_{2},x_{3})$ are canonical coordinates of $\mathbb{R}^{3}$. Since the translations are isometries now, Sol$_{3}$ is a homogeneous manifold with this metric.

\begin{rem}
This metric is not the only left-invariant one which can be put on Sol$_{3}$. In fact, there exists a two-parameter family of non isometric left-invariant metrics on Sol$_{3}$. One of these parameters is a homothetic one, and the other metrics than \eqref{eq2} (and the ones which are homothetic to \eqref{eq2}) have no reflections ; see \cite{MeeksPerez}.
\end{rem}

By setting

$$E_{1}(x) = e^{-x_{3}} \partial_{1},~~~~~~E_{2}(x) = e^{x_{3}} \partial_{2},~~~~~~E_{3}(x) = \partial_{3},$$
we obtain a left-invariant orthonormal frame $(E_{1},E_{2},E_{3})$. Thus, we have now two frames to express the coordinates of a vector field on Sol$_{3}$ ; we will denote into brackets the coordinates in the frame $(E_{1},E_{2},E_{3})$ ; then we have at a point $x \in \mbox{Sol}_{3}$

\begin{eqnarray}
a_{1} \partial_{1} + a_{2} \partial_{2} + a_{3} \partial_{3} = \left( \begin{matrix} a_{1} \\ a_{2} \\ a_{3} \end{matrix} \right) = \left[ \begin{matrix} e^{x_{3}} a_{1} \\ e^{-x_{3}} a_{2} \\ a_{3} \end{matrix} \right]. \label{eq3}
\end{eqnarray}
The following property holds (cf \cite{DanielMira}) :

\begin{prop}
\label{propisometriessol}
 The isotropy group of the origin of \emph{Sol}$_{3}$ is isomorphic to the dihedral group $\mathcal{D}_{4}$ and generated by the two isometries

$$\sigma : (x_{1},x_{2},x_{3}) \longmapsto (x_{2},-x_{1},-x_{3})~~~~~~\mbox{and}~~~~~~\tau : (x_{1},x_{2},x_{3}) \longmapsto (-x_{1},x_{2},x_{3}).$$
These two isometries are orientation-reversing, and $\sigma$ has order $4$ and $\tau$ has order $2$.
\end{prop}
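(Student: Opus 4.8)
The plan is to prove the statement in three movements: check that $\sigma$ and $\tau$ lie in the isotropy group, identify the abstract group they generate, and then show they generate \emph{all} of it.

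First I would verify that $\sigma$ and $\tau$ fix the origin and are isometries by pulling back the metric \eqref{eq2}. For $\tau$ this is immediate, since it only changes the sign of $x_1$ and leaves $x_3$ (hence the conformal factors $e^{\pm 2x_3}$) untouched. For $\sigma$, writing $(y_1,y_2,y_3)=\sigma(x)=(x_2,-x_1,-x_3)$ and substituting into $e^{2y_3}dy_1^2+e^{-2y_3}dy_2^2+dy_3^2$, the sign change $x_3\mapsto -x_3$ exchanges the two conformal factors exactly as $\sigma$ exchanges the roles of $dx_1$ and $dx_2$, recovering \eqref{eq2}. Reading off the differentials at the origin, $d\tau_0=\mathrm{diag}(-1,1,1)$ and $d\sigma_0$ sends $E_1\mapsto -E_2$, $E_2\mapsto E_1$, $E_3\mapsto -E_3$; both have determinant $-1$, hence are orientation-reversing, and a direct computation gives $\tau^2=\mathrm{id}$ together with $\sigma^2:(x_1,x_2,x_3)\mapsto(-x_1,-x_2,x_3)\neq\mathrm{id}$ and $\sigma^4=\mathrm{id}$, i.e. the announced orders $2$ and $4$. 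Computing $\tau\sigma\tau$ and comparing with $\sigma^{-1}$ then yields the dihedral relation $\tau\sigma\tau=\sigma^{-1}$; since moreover $\tau$ is none of the four powers of $\sigma$, the subgroup $\langle\sigma,\tau\rangle$ is a copy of the dihedral group $\mathcal{D}_4$ of order $8$.

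The heart of the proof, and the main obstacle, is the reverse inclusion: that every isometry fixing the origin belongs to $\langle\sigma,\tau\rangle$. I would invoke the standard fact that an isometry of a connected Riemannian manifold is determined by its value and its differential at a single point; as all our isometries fix the origin, the assignment $\phi\mapsto d\phi_0$ embeds the isotropy group into $O(T_0\mathrm{Sol}_3)=O(3)$, the frame $(E_1,E_2,E_3)$ being orthonormal at $0$. It then suffices to bound the matrices $A=d\phi_0$ that can occur, and this is where curvature enters. Computing the Levi-Civita connection from the brackets $[E_1,E_3]=E_1$, $[E_2,E_3]=-E_2$, $[E_1,E_2]=0$ gives the sectional curvatures $K(E_1,E_2)=1$ and $K(E_1,E_3)=K(E_2,E_3)=-1$, hence a Ricci tensor with eigenvalues $0,0,-2$. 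Since $A$ must preserve the Ricci tensor, it preserves the vertical axis $\mathbb{R}E_3$ (the $-2$ eigenline) and the horizontal plane $H=\mathrm{span}(E_1,E_2)$, so $A=\mathrm{diag}(B,\varepsilon)$ with $B\in O(2)$ and $\varepsilon=\pm1$.

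The subtlety is that the Ricci tensor does not distinguish $E_1$ from $E_2$ inside $H$, so a finer invariant is needed. I would use the connection itself: the operator $S(X)=\nabla_X E_3$ restricted to $H$ satisfies $S(E_1)=E_1$ and $S(E_2)=-E_2$, i.e. $S=\mathrm{diag}(1,-1)$. Because $\pm E_3$ is the \emph{intrinsically defined} unit Ricci eigenfield, naturality of the Levi-Civita connection under an isometry forces $BSB^{-1}=\varepsilon S$ on $H$. As $S$ has distinct eigenvalues, this leaves exactly four admissible $B$ when $\varepsilon=+1$ (the diagonal sign matrices, which commute with $S$) and exactly four when $\varepsilon=-1$ (those swapping the two eigenlines of $S$), for a total of at most $8$ matrices $A$. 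Hence the isotropy group has order at most $8$; combined with the order-$8$ subgroup $\langle\sigma,\tau\rangle$ already exhibited, this gives equality, and the proposition follows.
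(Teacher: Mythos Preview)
Your proof is correct and self-contained. In the paper itself the proposition is not proved: it is stated with a reference to \cite{DanielMira}, and the text merely lists the eight elements of the isotropy group explicitly. So you are supplying strictly more than the paper does.

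A brief comparison for the record: the paper's ``proof'' is the enumeration of the eight maps $\mathrm{id},\sigma,\sigma^2,\sigma^3,\tau,\sigma\tau,\sigma^2\tau,\sigma^3\tau$ together with their geometric descriptions, which verifies that $\langle\sigma,\tau\rangle$ has order $8$ and satisfies the dihedral relations, but gives no argument that there are no further isotropies. Your approach handles that upper bound intrinsically: after using the Ricci eigenvalues $(0,0,-2)$ to force $d\phi_0=\mathrm{diag}(B,\varepsilon)$, you exploit the shape operator $S=\nabla_{(\cdot)}E_3=\mathrm{diag}(1,-1)$ on the horizontal plane, together with the fact that $\phi_*E_3=\varepsilon E_3$ globally (since $E_3$ spans the $-2$ Ricci eigenline), to obtain $BSB^{-1}=\varepsilon S$. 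This pins down exactly $4+4$ admissible matrices and closes the argument. The only place where a reader might want one more line is the passage from naturality of $\nabla$ to $BSB^{-1}=\varepsilon S$; spelling out $d\phi_0\bigl((\nabla_X E_3)|_0\bigr)=(\nabla_{d\phi_0 X}\,\phi_*E_3)|_0=\varepsilon\,(\nabla_{d\phi_0 X}E_3)|_0$ would make that step airtight.
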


For instance, the planar reflection with respect to $\{ x_{2} = 0 \}$ is given by $\sigma^{2} \tau : (x_{1},x_{2},x_{3}) \longmapsto (x_{1},-x_{2},x_{3})$. The set of isotropies of $(0,0,0)$ is

$$\begin{array}{rcl}
(x_{1},x_{2},x_{3}) & \longmapsto & (x_{1},x_{2},x_{3})~; \\
(x_{1},x_{2},x_{3}) & \overset{\sigma}{\longmapsto} & (x_{2},-x_{1},-x_{3}) \mbox{ : rotation around } E_{3} \mbox{ of angle } 3\pi / 2 \mbox{ composed with the} \\
				    &  & ~~~~~~~~~~~~~~~~~~~ \mbox{ reflection with respect to } \{x_{3} = 0 \}~; \\
(x_{1},x_{2},x_{3}) & \overset{\tau}{\longmapsto} & (-x_{1},x_{2},x_{3}) \mbox{ : reflection with respect to } \{x_{1} = 0 \}~;\\
(x_{1},x_{2},x_{3}) & \overset{\sigma^{2}}{\longmapsto} & (-x_{1},-x_{2},x_{3}) \mbox{ : rotation of angle } \pi \mbox{ around } E_{3}~; \\
(x_{1},x_{2},x_{3}) & \overset{\sigma^{3}}{\longmapsto} & (-x_{2},x_{1},-x_{3}) \mbox{ : rotation around } E_{3} \mbox{ of angle } \pi / 2 \mbox{ composed with the} \\
				    &  & ~~~~~~~~~~~~~~~~~~~ \mbox{ reflection with respect to } \{x_{3} = 0 \}~; \\
(x_{1},x_{2},x_{3}) & \overset{\sigma \tau}{\longmapsto} & (x_{2},x_{1},-x_{3}) \mbox{ : rotation of angle } \pi \mbox{ around the axis } \{(x_{1},x_{1},0)\}~; \\
(x_{1},x_{2},x_{3}) & \overset{\sigma^{2} \tau}{\longmapsto} & (x_{1},-x_{2},x_{3}) \mbox{ : reflection with respect to } \{x_{2} = 0 \}~; \\
(x_{1},x_{2},x_{3}) & \overset{\sigma^{3} \tau}{\longmapsto} & (-x_{2},-x_{1},-x_{3}) \mbox{ : rotation of angle } \pi \mbox{ around the axis } \{(x_{1},-x_{1},0)\}.
\end{array}$$
We deduce the following theorem :

\begin{theo}
The isometry group of \emph{Sol}$_{3}$ has dimension $3$.
\end{theo}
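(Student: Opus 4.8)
The plan is to deduce the theorem from the homogeneity of Sol$_3$ together with Proposition \ref{propisometriessol}, by an orbit--stabilizer count. Recall first that, by the Myers--Steenrod theorem, the isometry group $\mathrm{Isom}(\mathrm{Sol}_3)$ of $(\mathrm{Sol}_3, ds^2)$ is a Lie group acting smoothly and properly on Sol$_3$; what remains is to compute its dimension.

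First I would use Section $2$: the left translations $l_a$, $a \in \mathbb{R}^3$, are isometries of the metric \eqref{eq2} by the construction \eqref{eq1}, and they act transitively on Sol$_3$. Hence the $\mathrm{Isom}(\mathrm{Sol}_3)$-orbit of the origin is all of Sol$_3$; since the action is proper, this orbit is an embedded $3$-dimensional submanifold, and the orbit--stabilizer relation gives
$$\dim \mathrm{Isom}(\mathrm{Sol}_3) = \dim \mathrm{Sol}_3 + \dim \mathrm{Stab}(0) = 3 + \dim \mathrm{Stab}(0).$$
By Proposition \ref{propisometriessol} the isotropy subgroup $\mathrm{Stab}(0)$ is isomorphic to the dihedral group $\mathcal{D}_4$, hence finite, so $\dim \mathrm{Stab}(0) = 0$ and $\dim \mathrm{Isom}(\mathrm{Sol}_3) = 3$. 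Concretely, writing an arbitrary isometry $\phi$ as $\phi = l_{\phi(0)} \circ \bigl(l_{\phi(0)}^{-1} \circ \phi\bigr)$, where $l_{\phi(0)}^{-1} \circ \phi$ fixes the origin, identifies $\mathrm{Isom}(\mathrm{Sol}_3)$ with the disjoint union of the eight pieces $\{\, l_a \circ h : a \in \mathbb{R}^3 \,\}$, $h \in \mathcal{D}_4$, each diffeomorphic to $\mathbb{R}^3$.

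I do not expect any real obstacle here: once Proposition \ref{propisometriessol} is granted, the argument only uses the standard facts that the isometry group of a Riemannian manifold is a Lie group acting properly and that orbit dimension plus stabilizer dimension equals the group dimension. The only point worth stressing is that the cheap bounds --- $\dim \geq 3$ from the translations and $\dim \leq 6$ with equality only for space forms --- still leave open the value $4$ (realized, e.g., by Nil$_3$ and $\widetilde{PSL_{2}}(\mathbb{R})$), so the finiteness of the isotropy group supplied by Proposition \ref{propisometriessol} is genuinely what forces the dimension down to $3$.
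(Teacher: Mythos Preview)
Your argument is correct and is exactly the deduction the paper has in mind: the paper does not spell out a proof but simply writes ``We deduce the following theorem'' immediately after Proposition~\ref{propisometriessol}, relying on the same orbit--stabilizer count (transitive left translations plus finite isotropy $\mathcal{D}_4$) that you have made explicit. There is nothing to add or correct.
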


Finally, we express the Levi-Civita connection $\nabla$ of Sol$_{3}$ associated to the metric given by the equation \eqref{eq2} in the frame $(E_{1},E_{2},E_{3})$. In a first time, we calculate the Lie brackets of the vectors of the frame. The Lie bracket in the Lie algebra $\mathfrak{sol}_{3}$ of Sol$_{3}$ is given by

$$[X,Y] = (Y_{3}X_{1} - X_{3}Y_{1},X_{3}Y_{2} - Y_{3}X_{2},0)$$

for all $X=(X_{1},X_{2},X_{3})$ and $Y = (Y_{1},Y_{2},Y_{3})$. Then we have

$$[E_{1},E_{2}] = 0,~~~~~~[E_{1},E_{3}] = E_{1},~~~~~~[E_{2},E_{3}] = -E_{2}.$$
We deduce the following results :

\begin{eqnarray*}
 \nabla_{E_{1}}E_{1} = -E_{3}, & ~~~~~~~~\nabla_{E_{2}}E_{1} = 0,~~~~~~~~ & \nabla_{E_{3}}E_{1} = 0,\\
 \nabla_{E_{1}}E_{2} = 0, & \nabla_{E_{2}}E_{2} = E_{3}, & \nabla_{E_{3}}E_{2} = 0, \\
 \nabla_{E_{1}}E_{3} = E_{1}, & \nabla_{E_{2}}E_{3} = -E_{2}, & \nabla_{E_{3}}E_{3} = 0.
\end{eqnarray*}

\section{The Gauss map}

Let $\Sigma$ be a riemannian oriented surface and $z = u+iv$ an isothermal system of coordinates in $\Sigma$. Let $x : \Sigma \longrightarrow \mbox{Sol}_{3}$ be a conformal immersion. We set

$$x = \begin{pmatrix} x_{1} \\ x_{2} \\ x_{3} \end{pmatrix}$$
and we define $\lambda \in \mathbb{R}^{*}_{+}$ by

$$2(x_{z},x_{\overline{z}}) = \left\| x_{u} \right\|^{2} = \left\| x_{v} \right\|^{2} = \lambda.$$
Thus, a unit normal vector field is $N : \Sigma \longrightarrow T \mbox{Sol}_{3}$ defined by 

\begin{eqnarray*}
N & = & -\frac{2i}{\lambda} x_{z} \wedge x_{\overline{z}} \\
  & := & \left[ \begin{matrix} N_{1} \\ N_{2} \\ N_{3} \end{matrix} \right].
\end{eqnarray*}
Hence we define

$$\widehat{N} : \Sigma \longrightarrow \mathbb{S}^{2} \subset \mathbb{R}^{3}$$
by the formula

$$M_{x^{-1}} N = \widehat{N},$$
that is

\begin{eqnarray*}
\widehat{N} & = & \begin{pmatrix} e^{x_{3}} & 0 & 0 \\ 0 & e^{-x_{3}}  & 0 \\ 0 & 0 & 1 \end{pmatrix} \begin{pmatrix} N_{1} e^{-x_{3}} \\ N_{2} e^{x_{3}} \\ N_{3} \end{pmatrix} \\
            & = & \begin{pmatrix} N_{1} \\ N_{2} \\ N_{3} \end{pmatrix}.
\end{eqnarray*}

\begin{defin}
The Gauss map of the immersion $x$ is the application

$$g = \sigma \circ \widehat{N} : \Sigma \longrightarrow \mathbb{C} \cup \{ \infty \} = \overline{\mathbb{C}},$$
where $\sigma$ is the stereographic projection with respect to the southern pole, i.e.

\begin{eqnarray}
 N & = & \dfrac{1}{1 + \left| g \right|^{2}} \left[ \begin{matrix} 2 \Re{(g)} \\ 2 \Im{(g)} \\ 1 - \left| g \right|^{2} \end{matrix} \right] \label{eq4} ; \\
 g & = & \dfrac{N_{1} + iN_{2}}{1 + N_{3}} \label{eq5}.
\end{eqnarray}

\end{defin}

The following result can be found in \cite{Inoguchi}. It can be viewed as a Weierstrass representation in Sol$_{3}$.

\begin{theo}
\label{theoweierstrasssol3}
Let $x : \Sigma \longrightarrow \emph{Sol}_{3}$ be a conformal minimal immersion and $g : \Sigma \longrightarrow \overline{\mathbb{C}}$ its Gauss map. Then, whenever $g$ is neither real nor pure imaginary, it is nowhere antiholomorphic ($g_{z} \neq 0$ for every point for any local conformal parameter $z$ on $\Sigma$), and it satisfies the second order elliptic equation

\begin{eqnarray}
 g_{z \overline{z}} = \dfrac{2g g_{z} g_{\overline{z}}}{g^{2} - \overline{g}^{2}}. \label{eq6}
\end{eqnarray}
Moreover, the immersion $x = (x_{1},x_{2},x_{3})$ can be expressed in terms of $g$ by the representation formulas

\begin{eqnarray}
 {x_{1}}_{z} = e^{-x_{3}} \dfrac{(\overline{g}^{2} - 1)g_{z}}{g^{2} - \overline{g}^{2}},~~~~{x_{2}}_{z} = ie^{x_{3}} \dfrac{(\overline{g}^{2} + 1)g_{z}}{g^{2} - \overline{g}^{2}},~~~~{x_{3}}_{z} = \dfrac{2\overline{g}g_{z}}{g^{2} - \overline{g}^{2}} \label{eq7}
\end{eqnarray}
whenever it is well defined.

Conversely, given an application $g : \Sigma \longrightarrow \overline{\mathbb{C}}$ defined on a simply connected riemannian surface $\Sigma$ that verify the equation \eqref{eq6}, then the application $x : \Sigma \longrightarrow \emph{Sol}_{3}$ given by the representation formulas \eqref{eq7} is a conformal minimal immersion with possibly branched points whenever it is well defined, whose Gauss map is $g$.
\end{theo}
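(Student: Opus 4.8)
The plan is to verify the theorem in two directions, treating the "forward" part (an immersion determines a Gauss map satisfying \eqref{eq6} and \eqref{eq7}) and the "converse" part (data satisfying \eqref{eq6} yields a conformal minimal immersion) separately, since the harmonicity equation \eqref{eq6} is exactly the integrability/compatibility condition that links them. I would begin by computing, for a conformal immersion $x:\Sigma\to\mathrm{Sol}_3$, the structure equations in the left-invariant frame $(E_1,E_2,E_3)$: expand $x_z$ in this frame using \eqref{eq3}, write the conformality conditions $(x_z,x_z)=0$, $(x_z,x_{\bar z})=\lambda/2$, and express $N$ via the wedge product. Feeding $N=\widehat N$ (the identification shown in the excerpt) into \eqref{eq4}--\eqref{eq5} gives $g$ in terms of the frame-components of $x_z$; inverting, one solves for those components in terms of $g$ and $x_3$, which should produce \eqref{eq7} up to the as-yet-unknown conformal factor. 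The first serious computation is to show that the minimality of $x$ (mean curvature zero, i.e. $x_{z\bar z}$ is normal-free in the appropriate sense once one accounts for the connection coefficients $\nabla_{E_i}E_j$ listed at the end of Section 2) forces $g$ to satisfy \eqref{eq6}; this is where the explicit Christoffel symbols of $\mathrm{Sol}_3$ enter decisively.

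For the claim that $g$ is nowhere antiholomorphic when $g$ is neither real nor purely imaginary, I would argue by contradiction from \eqref{eq7}: if $g_z=0$ at a point then all three ${x_i}_z$ vanish there, contradicting that $x$ is an immersion (so $\lambda>0$, hence $x_z\neq 0$) — provided the denominator $g^2-\bar g^2$ is nonzero, which is precisely the hypothesis that $g\notin\mathbb R\cup i\mathbb R$. One should also check consistency: $g^2-\bar g^2 = (g-\bar g)(g+\bar g) = 2i\,\Im(g)\cdot 2\,\Re(g)$, so this quantity vanishes exactly on the locus where $g$ is real or imaginary, matching the statement. The phrase "whenever it is well defined" in \eqref{eq7} refers to the integration: the right-hand sides involve $e^{\pm x_3}$, so $x_3$ must be obtained first by integrating its own equation ${x_3}_z = 2\bar g g_z/(g^2-\bar g^2)$, which requires the $1$-form $\omega_3 := 2\bar g g_z/(g^2-\bar g^2)\,dz + \overline{(\cdot)}\,d\bar z$ to be closed (equivalently exact, on simply connected $\Sigma$).

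For the converse, start from $g:\Sigma\to\overline{\mathbb C}$ on a simply connected $\Sigma$ satisfying \eqref{eq6}. The key step is to show the real $1$-form $\alpha_3 = 2\Re\big(\tfrac{2\bar g g_z}{g^2-\bar g^2}\,dz\big)$ is closed: compute $d\alpha_3$ and show its $dz\wedge d\bar z$ coefficient vanishes by virtue of \eqref{eq6} (this is the heart of the matter). Then define $x_3$ by integration (well defined up to a constant since $\Sigma$ is simply connected), substitute into the first two formulas of \eqref{eq7}, and repeat: show the resulting $1$-forms defining $x_1$ and $x_2$ are closed — now using both \eqref{eq6} and the already-established equation $dx_3 = \alpha_3$ — and integrate to get $x_1,x_2$. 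Finally one verifies directly from \eqref{eq7} that the map $x=(x_1,x_2,x_3)$ so constructed is conformal for the metric \eqref{eq2} (check $(x_z,x_z)=0$ and that $(x_z,x_{\bar z})>0$ away from branch points, i.e. where $g_z\neq 0$) and that its Gauss map, recomputed via \eqref{eq4}--\eqref{eq5}, is the original $g$; minimality then follows because the trace of the second fundamental form can be read off to vanish, or alternatively because the same computation as in the forward direction is reversible.

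The main obstacle I anticipate is the closedness verifications — showing that the $dz\wedge d\bar z$-components of $d\alpha_3$ and of the analogous $1$-forms for $x_1,x_2$ vanish. These are not formal: they require differentiating the coefficients in \eqref{eq7} (which contain $\bar g$, $g_z$, $g_{\bar z}$, and $e^{\pm x_3}$ with $x_3$ itself defined by an integral), using $g_{z\bar z}$ from \eqref{eq6} to eliminate second derivatives, and then watching a fair amount of cancellation. I would organize this by first treating $x_3$ (which decouples), then handling $x_1\pm i x_2$ or the combination adapted to the $E_1,E_2$ frame so that the $e^{\pm x_3}$ factors are absorbed cleanly; the connection coefficients $\nabla_{E_1}E_1=-E_3$, $\nabla_{E_2}E_2=E_3$, $\nabla_{E_1}E_3=E_1$, $\nabla_{E_2}E_3=-E_2$ will reappear here as the source of the $e^{\pm x_3}$ terms, so keeping the computation in the moving frame rather than in coordinates should make the cancellations transparent.
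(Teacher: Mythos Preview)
The paper does not prove this theorem: it is stated with the preface ``The following result can be found in \cite{Inoguchi},'' and no argument is given in the paper itself. So there is no proof in the paper to compare your proposal against.

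That said, your outline is a reasonable sketch of how a Weierstrass-type representation theorem of this kind is typically established (and is in the spirit of what Inoguchi--Lee do): derive the frame-component equations from conformality and the connection table, read off \eqref{eq7} by solving for the components of $x_z$ in terms of $g$, and identify \eqref{eq6} as the integrability condition coming from minimality. Your argument that $g_z\neq 0$ because otherwise $x_z=0$ is the right idea, though strictly speaking you should first establish \eqref{eq7} before invoking it; the cleaner route is to express the conformal factor $\lambda$ directly as a positive multiple of $|g_z|^2/|g^2-\bar g^2|$ (which the paper uses implicitly later, e.g.\ in the metric computations for the helicoid and catenoid), so that $\lambda>0$ forces $g_z\neq 0$. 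Your plan for the converse --- integrate $x_3$ first, then $x_1,x_2$, checking closedness at each stage via \eqref{eq6} --- is correct, and your anticipation that the cancellations are best organized in the moving frame is sound. None of this is wrong; it is simply not something the present paper undertakes.
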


\begin{rem}
\label{remarqueequationharmonique}
\begin{enumerate}
\item There exists a similar result for the case of CMC $H$ surfaces ; see \cite{DanielMira}.
\item The equation \eqref{eq6} is the one of harmonic maps $g : \Sigma \longrightarrow (\overline{\mathbb{C}},ds^{2})$, with $ds^{2}$ the metric on $\overline{\mathbb{C}}$ defined by

$$ds^{2} = \dfrac{\left| d \omega \right|^{2}}{\left| \omega^{2} - \overline{\omega}^{2} \right|}.$$
This is a singular metric, not defined on the real and pure imaginary axes. See \cite{Inoguchi} for more details.
\item The equation \eqref{eq6} can be only considered at points where $g \neq \infty$. But if $g$ is a solution of \eqref{eq6}, $i/g$ is one too at points where $g \neq 0$. The conjugate map $\overline{g}$ and every $g \circ \phi$, with $\phi$ a locally injective holomorphic function, are solutions too. Moreover, if $g$ is a solution nowhere antiholomorphic of \eqref{eq6}, and $x$ the induced conformal minimal immersion, then $ig$ and $1 / g$ induce conformal minimal immersions given by $\sigma \circ x$ and $\tau \circ x$. Finally, $\overline{g}$ is the Gauss map of $\sigma^{2} \tau \circ x$ after a change of orientation.
\end{enumerate}
\end{rem}

\begin{defin}
The Hopf differential of the map $g$ is the quadratic form

$$Q = q dz^{2} = \dfrac{g_{z} \overline{g}_{z}}{g^{2} - \overline{g}^{2}} dz^{2}.$$
\end{defin}

\begin{rem}
\begin{enumerate}
\item The function $q$ depends on the choice of coordinates, whereas $Q$ does not.
\item As we said in the introduction of this paper, the Hopf differential (or its Abresch-Rosenberg generalization) is not defined in Sol$_{3}$. If we apply the definition of the Hopf differential of the harmonic maps in $(\overline{\mathbb{C}},ds^{2})$, we get

$$Q = \dfrac{g_{z} \overline{g}_{z}}{|g^{2} - \overline{g}^{2}|} dz^{2},$$
but this one leads to a non smooth differential. Because $g^{2} - \overline{g}^{2}$ is pure imaginary, the definitions are related on each quarter of the complex plane by multiplication by $i$ or $-i$, depending on which quarter we are. Thus, this "Hopf differential" is defined and holomorphic only on each of the four quarter delimited by the real and pure imaginary axes.
\end{enumerate}
\end{rem}

\section{Construction of the helicoids in Sol$_{3}$}

In this section we construct a one-parameter family of helicoids in Sol$_{3}$ : we call helicoid a minimal surface containing the $x_{3}$-axis whose intersection with every plane $\{ x_{3} = \mbox{cte} \}$ is a straight line and which is invariant by left multiplication by an element of the form $(0,0,T)$ for some $T \neq 0$ :

\begin{theo}
There exists a one-parameter family $(\mathcal{H}_{K})_{K \in ]-1;1[ \backslash \{ 0 \}}$ of properly embedded minimal helicoids in \emph{Sol}$_{3}$ having the following properties :
\begin{enumerate}
\item For all $K \in ]-1;1[ \backslash \{ 0 \}$, the surface $\mathcal{H}_{K}$ contains the $x_{3}$-axis ;
\item For all $K \in ]-1;1[ \backslash \{ 0 \}$, the intersection of $\mathcal{H}_{K}$ and any horizontal plane $\{ x_{3} = \lambda \}$ is a straight line ;
\item For all $K \in ]-1;1[ \backslash \{ 0 \}$, there exists $T_{K}$ such that $\mathcal{H}_{K}$ is invariant by left multiplication by $(0,0,T_{K})$ ;
\item The helicoids $\mathcal{H}_{K}$ have $3$ symmetries fixing the origin :
\begin{itemize}
\item rotation of angle $\pi$ around the $x_{3}$-axis ;
\item rotation of angle $\pi$ around the $(x,x,0)$-axis ;
\item rotation of angle $\pi$ around the $(x,-x,0)$-axis.
\end{itemize}
\end{enumerate}
\end{theo}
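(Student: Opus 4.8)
The plan is to realize the helicoids $\mathcal{H}_K$ via the Weierstrass representation of Theorem \ref{theoweierstrasssol3}, by exhibiting an explicit separated-variables Gauss map $g$ solving the harmonic map equation \eqref{eq6}. Motivated by the model case of the vertical plane $\{x_1 = 0\}$ (whose Gauss map is pure imaginary) and by the analogous constructions of Daniel--Hauswirth in Nil$_3$, I would look for $g$ of the form $g(u+iv) = \alpha(u)$ or, more precisely, a map depending only on one of the real coordinates, so that \eqref{eq6} reduces to an ODE. Because the ambient metric has the reflection $\tau$, and because $ig$ and $1/g$ produce the images $\sigma\circ x$ and $\tau\circ x$ of the immersion (Remark \ref{remarqueequationharmonique}), it is natural to search inside the family where $g$ takes values whose modulus or argument is constrained; concretely I expect $g = e^{i\theta(u)}$ (values on the unit circle) to be the right ansatz, the parameter $K$ entering through the first integral of the resulting ODE for $\theta$. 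Plugging this into \eqref{eq6} gives a first-order autonomous ODE whose solutions are, after a change of variable, elementary (expressible with the functions $\cotan$, $\argsh$ that the preamble declares), and one reads off that the immersion is well defined on all of $\mathbb{C}$.

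Once $g$ is fixed, I would integrate the representation formulas \eqref{eq7}. The third one, ${x_3}_z = 2\overline g g_z/(g^2 - \overline g^2)$, is integrated first (for $g$ on the unit circle the right-hand side depends on $u$ only, so $x_3$ is an explicit function of $u$); then $e^{-x_3}$ and $e^{x_3}$ are substituted into the first two equations and one integrates in $z$. I expect $x_1$ and $x_2$ to come out of the form $x_1 = f_1(u)\,v + h_1(u)$, $x_2 = f_2(u)\,v + h_2(u)$ for explicit functions, which makes properties (1)--(3) transparent: setting $v = 0$ or evaluating on $\{x_3 = \lambda\}$ (a level set of $x_3$, hence of $u$) shows that each horizontal slice is the line $\{x = x(u_0, v): v \in \mathbb{R}\}$ (property 2), the slice through $u_0$ with $f_1(u_0) = f_2(u_0) = 0$ — which exists by the symmetry of the ODE — passes through the $x_3$-axis, and more work shows the whole axis is contained in the surface (property 1); the invariance under $l_{(0,0,T_K)}$ (property 3) follows because shifting $u$ by a period of the $\theta$-ODE translates $x_3$ by a constant $T_K$ while acting on $(x_1, x_2)$ exactly by the diagonal matrix $M_{(0,0,T_K)}$, i.e. by left multiplication. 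Embeddedness and properness then follow from the explicit formulas: $x_3$ is a proper function of $u$ on a fundamental domain, and on each slice the map $v \mapsto x$ is an affine embedding.

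For property (4), the three symmetries fixing the origin, I would use Remark \ref{remarqueequationharmonique}: the symmetries of the surface correspond to symmetries of the Gauss map $g$ compatible with the representation formulas. The rotation of angle $\pi$ around the $x_3$-axis is $\sigma^2$, under which $g \mapsto -g$; the other two are $\sigma\tau$ and $\sigma^3\tau$ (rotations of angle $\pi$ about the diagonal axes $\{(x,x,0)\}$ and $\{(x,-x,0)\}$), for which one checks the corresponding transformation of $g$ (conjugation composed with a rotation of $\overline{\mathbb{C}}$). For the ansatz $g = e^{i\theta(u)}$ these are realized by symmetries $u \mapsto \pm u + c$ of the $\theta$-ODE together with $v \mapsto \pm v$, and one verifies that the composition with the explicit immersion $x$ reproduces the claimed ambient isometry; the origin being a fixed point comes from choosing the integration constants so that $x(u_0, 0) = 0$ at the slice meeting the axis.

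The main obstacle I anticipate is not the ODE itself but bookkeeping around the singular metric and the global geometry: equation \eqref{eq6} degenerates exactly where $g$ is real or pure imaginary, so I must check that the chosen $g = e^{i\theta(u)}$ only meets those axes at isolated values of $u$ (the four points $\pm 1, \pm i$) and that the representation formulas \eqref{eq7}, which have $g^2 - \overline g^2$ in the denominator, still extend smoothly across those parameter values — this requires showing the apparent singularities of ${x_j}_z$ are removable, presumably because $g_z$ vanishes there to compensate (consistent with the ``nowhere antiholomorphic'' clause of Theorem \ref{theoweierstrasssol3}). The secondary difficulty is proving global embeddedness rather than mere immersion: one must rule out self-intersections between different horizontal slices, which I would handle by showing $x_3$ is strictly monotone as a function of $u$ on each fundamental interval and periodic overall, so distinct slices sit at distinct heights up to the vertical period, and then invoke the invariance of property (3) to conclude for the whole surface.
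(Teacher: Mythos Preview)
Your central ansatz is wrong, and this derails the rest of the plan. If $g=e^{i\theta(u)}$ takes values on the unit circle, then by \eqref{eq4} the third component of the normal is $N_3=(1-|g|^2)/(1+|g|^2)\equiv 0$, so the tangent plane always contains $\partial_3$. A short computation with the representation formulas \eqref{eq7} then shows that $x_{1u}=x_{2u}=0$ and $x_{1v},x_{2v}$ are constants, so the image is a vertical plane, not a genuine helicoid; the ODE $\theta''\sin(2\theta)=\theta'^2\cos(2\theta)$ you would obtain integrates to $\theta'=C\sqrt{|\sin(2\theta)|}$, and the only surfaces produced are these degenerate flat ones. In particular you cannot get a one-parameter family with the symmetries of item~(4), nor the discrete vertical period of item~(3).

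The paper's Gauss map is instead $g(u+iv)=e^{-u}\,e^{ib(v)}\,e^{-i\pi/4}$, where the modulus $|g|=e^{-u}$ varies and $b$ solves $b'=\sqrt{1-K\cos(2b)}$, $b(0)=0$; this is the separated-variables form that actually satisfies \eqref{eq6} with Hopf differential $Q=\tfrac{iK}{8}\,dz^2$. With this $g$ one finds that $x_3$ depends on $v$ alone (via $x_3'=K/(1+b')$) and that
\[
x_1=\tfrac{\sqrt 2}{2}(\cos b-\sin b)\,x_3'\,e^{-x_3}\sinh u,\qquad
x_2=\tfrac{\sqrt 2}{2}(\cos b+\sin b)\,x_3'\,e^{x_3}\sinh u,
\]
so the horizontal slice $\{x_3=\lambda\}$ corresponds to a fixed $v$ and is the line parametrized by $\sinh u$ (hence through the origin), and the entire $x_3$-axis is the image of $\{u=0\}$. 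Your expected structure $x_j=f_j(u)\,v+h_j(u)$ is therefore not what appears; the roles of $u$ and $v$ are reversed, and the dependence on the line parameter is $\sinh u$, not affine. The periodicity (property~3) then comes from $b(v+W)=b(v)+\pi$, which gives $x_3(v+W)=x_3(v)+x_3(W)$ and $(0,0,2x_3(W))\ast x(u,v)=x(u,v+2W)$, and the three symmetries of item~(4) are read off directly as $x(-u,v)=\sigma^2 x(u,v)$, $x(u,-v)=\sigma\tau\,x(u,v)$, $x(-u,-v)=\sigma^3\tau\,x(u,v)$. Your discussion of the removable singularities is on the right track, but for the correct $g$: at $\cos(2b)=0$ one has $b'=1$, hence $g_z=\tfrac12(b'-1)g=0$, which cancels the vanishing of $g^2-\overline g^2$ in \eqref{eq7}.
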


Let $K \in~]-1,1[$ ; we define a map $g : \mathbb{C} \longrightarrow \overline{\mathbb{C}}$ by

$$g(z=u+iv) = e^{-u} e^{ib(v)} e^{-i\pi/4},$$
where $b$ satisfies the ODE

\begin{eqnarray}
 b' = \sqrt{1 - K\cos{(2b)}},~~b(0)=0. \label{eq8}
\end{eqnarray}

\begin{prop}
The map $b$ is well defined and has the following properties :
\begin{enumerate}
\item The function $b$ is an increasing diffeomorphism from $\mathbb{R}$ onto $\mathbb{R}$ ;
\item The function $b$ is odd ;
\item There exists a real number $W > 0$ such that

$$\forall v \in \mathbb{R},~~~b(v+W) = b(v) + \pi~;$$
\item The function $b$ satisfies $b(kW) = k \pi$ for all $k \in \mathbb{Z}$.
\end{enumerate}
\end{prop}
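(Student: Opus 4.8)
The plan is to treat the ODE \eqref{eq8} as an autonomous equation $b' = f(b)$ with $f(b) = \sqrt{1 - K\cos(2b)}$, and to derive every stated property from the Cauchy--Lipschitz theorem combined with the elementary symmetries of $f$.

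First, for the well-definedness and item (1): since $|K| < 1$ we have $1 - K\cos(2b) \geq 1 - |K| > 0$ for every $b \in \mathbb{R}$, so $f$ is a smooth, $\pi$-periodic function satisfying $0 < \sqrt{1-|K|} \leq f \leq \sqrt{1+|K|}$. In particular $f$ is globally Lipschitz, so \eqref{eq8} admits a unique maximal solution; since $f$ is bounded, $|b'|$ is bounded along the solution, which therefore cannot escape to infinity in finite time and is thus defined on all of $\mathbb{R}$. From $b' = f(b) \geq \sqrt{1-|K|} > 0$ we get that $b$ is smooth and strictly increasing, and the same lower bound forces $b(v) \to \pm\infty$ as $v \to \pm\infty$; hence $b$ is an increasing diffeomorphism of $\mathbb{R}$ onto $\mathbb{R}$.

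Items (2) and (3) follow from uniqueness applied to suitable transforms of $b$. For (2), set $\beta(v) = -b(-v)$; using that $f$ is even (because $\cos$ is even) one checks that $\beta(0) = 0$ and $\beta'(v) = b'(-v) = f(b(-v)) = f(\beta(v))$, so $\beta$ solves the same Cauchy problem and $\beta = b$ by uniqueness, i.e. $b$ is odd. For (3), let $W = b^{-1}(\pi)$, which is a well-defined positive real by item (1), and set $c(v) = b(v+W) - \pi$; then $c(0) = 0$, and since $f$ is $\pi$-periodic, $c'(v) = f(b(v+W)) = f(c(v)+\pi) = f(c(v))$, so again $c = b$ by uniqueness, which is exactly the identity $b(v+W) = b(v) + \pi$ for all $v \in \mathbb{R}$.

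Finally, item (4) is obtained by iterating (3): applying the relation at $v = kW$ gives $b((k+1)W) = b(kW) + \pi$ for every $k \in \mathbb{Z}$, and since $b(0) = 0$ an immediate induction (completed for negative indices either by running the induction downward or by invoking the oddness from (2)) yields $b(kW) = k\pi$ for all $k \in \mathbb{Z}$. None of these steps presents a genuine obstacle; the only point deserving a little care is the global existence of $b$, which rests entirely on the uniform positivity and boundedness of $f$ guaranteed by the hypothesis $|K| < 1$.
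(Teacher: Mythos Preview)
Your proof is correct and follows essentially the same approach as the paper: you use the uniform positivity and boundedness of $f(b)=\sqrt{1-K\cos(2b)}$ to get global existence and the diffeomorphism property, and then apply Cauchy--Lipschitz uniqueness to the transforms $-b(-v)$ and $b(v+W)-\pi$ to obtain oddness and the $\pi$-shift relation. Your write-up is in fact a bit more explicit than the paper's (you spell out the $\pi$-periodicity and evenness of $f$ and the induction for item (4)), but the underlying argument is identical.
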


\begin{proof}[Proof]
Since $K \in~]-1,1[$, there exists $r > 0$ such that $1 - K\cos{(2b)} \in~]r,2[$ ; the Cauchy-Lipschitz theorem can be applied, and $b$ is well defined.
\begin{enumerate}
\item By \eqref{eq8} we have $b' > 0$ on its definition domain, and $\sqrt{r} < b' < 2$. So $b'$ is bounded by two positive constants, then $b$ is defined on the entire $\mathbb{R}$, and

$$\lim\limits_{v \to \pm \infty} b(v) = \pm \infty.$$
\item The function $\widehat{b} : v \longmapsto -b(-v)$ satisfies \eqref{eq8} with $\widehat{b}(0) = 0$ ; hence $\widehat{b} = b$ and $b$ is odd.

\item There exists $W > 0$ such that $b(W) = \pi$ ; Then the function $\widetilde{b} : v \longmapsto b(v+W) - \pi$ satisfies \eqref{eq8} with $\widetilde{b}(0) = 0$ ; hence $\widetilde{b} = b$.

\item Obvious.
\end{enumerate}
\end{proof}

\begin{cor}
\label{corollaireb}
We have $b(kW/2) = k \pi / 2$ for all $k \in 2 \mathbb{Z} +1$.
\end{cor}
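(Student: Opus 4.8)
The plan is to reduce everything to the single value $b(W/2)$, since the claimed identity at a general odd multiple of $W/2$ will then follow from iterating the quasi-periodicity relation in property 3 of the preceding Proposition.

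First I would show that $b(W/2) = \pi/2$. The clean way is to combine oddness (property 2) with quasi-periodicity (property 3): replacing $v$ by $-v$ in $b(v+W) = b(v) + \pi$ and using $b(-v) = -b(v)$ gives
$$b(W-v) = \pi - b(v) \qquad \text{for all } v \in \mathbb{R}.$$
(Equivalently, one can observe that $v \mapsto \pi - b(W-v)$ solves the ODE \eqref{eq8} with initial value $0$ at $v=0$, since $\cos$ has period $2\pi$, and invoke Cauchy--Lipschitz uniqueness.) Evaluating the displayed relation at $v = W/2$ yields $b(W/2) = \pi - b(W/2)$, hence $2\,b(W/2) = \pi$, i.e. $b(W/2) = \pi/2$.

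Next I would iterate property 3: a straightforward induction (upward using $b(v+W)=b(v)+\pi$, downward using the same relation at $v-W$) gives $b(v + jW) = b(v) + j\pi$ for every $j \in \mathbb{Z}$. Writing an arbitrary odd integer as $k = 2j+1$ with $j \in \mathbb{Z}$, we get
$$b\!\left(\frac{kW}{2}\right) = b\!\left(\frac{W}{2} + jW\right) = b\!\left(\frac{W}{2}\right) + j\pi = \frac{\pi}{2} + j\pi = \frac{(2j+1)\pi}{2} = \frac{k\pi}{2},$$
which is the assertion of the Corollary.

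There is no real obstacle here: the argument is purely formal once the symmetry $b(W-v) = \pi - b(v)$ is in hand, and that symmetry is itself immediate from the oddness and quasi-periodicity already proved (or, alternatively, from uniqueness of solutions of \eqref{eq8}). The only point to be careful about is that the iteration of property 3 is valid for negative $j$ as well, so that the statement indeed holds for all of $2\mathbb{Z}+1$ and not merely the positive odd integers.
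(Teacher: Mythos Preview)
Your proof is correct and follows essentially the same approach as the paper: both combine oddness and the quasi-periodicity $b(v+W)=b(v)+\pi$ to deduce $b(W/2)=\pi/2$, and then extend to all odd $k$ by iterating the quasi-periodicity. You simply spell out the symmetry $b(W-v)=\pi-b(v)$ and the induction for general $k$ more explicitly than the paper, which compresses the first step into the single line $b(W/2)=b(-W/2+W)=-b(W/2)+\pi$ and dismisses the rest with ``then we conclude easily.''
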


\begin{proof}[Proof]

We have
\begin{eqnarray*}
b \left( \dfrac{W}{2} \right) & = & b \left( -\dfrac{W}{2} + W \right) \\
		& = & - b \left( \dfrac{W}{2} \right) + \pi
\end{eqnarray*}
which gives the formula for $k = 1$, then we conclude easily.
\end{proof}

\begin{prop}
The function $g$ satisfies

$$(g^{2} - \overline{g}^{2})g_{z \overline{z}} = 2g g_{z} g_{\overline{z}}$$
and its Hopf differential is

\begin{eqnarray}
Q = \dfrac{iK}{8} dz^{2}. \label{eq9}
\end{eqnarray}
\end{prop}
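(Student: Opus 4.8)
The plan is to verify both assertions by direct computation from the explicit formula $g(z) = e^{-u}e^{ib(v)}e^{-i\pi/4}$, exploiting the separated-variables structure. First I would compute the relevant derivatives. Writing $g = e^{-u+ib(v)-i\pi/4}$, one has $g_z = \tfrac12(g_u - i g_v) = \tfrac12(-1 - i\cdot i b')g = \tfrac12(b'-1)g$ and $g_{\bar z} = \tfrac12(g_u + i g_v) = \tfrac12(-1 + i\cdot i b')g = -\tfrac12(b'+1)g$, where $b' = b'(v)$. In particular $g_z \neq 0$ everywhere (since $b' \geq \sqrt{1-|K|} > 0$ would force $b' = 1$ only on a measure-zero set, but actually we just need $g_z$ nonzero, and $b'-1$ could vanish — however $g$ is nowhere antiholomorphic is not what is claimed here, only the PDE). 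Next, $g^2 = e^{-2u}e^{2ib(v)}e^{-i\pi/2} = -i e^{-2u}e^{2ib}$ and $\overline{g}^2 = i e^{-2u}e^{-2ib}$, so $g^2 - \overline{g}^2 = -i e^{-2u}(e^{2ib} + e^{-2ib}) = -2i e^{-2u}\cos(2b)$.

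For the PDE, I would compute $g_{z\bar z}$. Starting from $g_{\bar z} = -\tfrac12(b'+1)g$, differentiate with respect to $z$: since $b'+1$ depends only on $v$, $\partial_z[(b'+1)g] = (b'+1)g_z + g\,\partial_z(b'+1)$, and $\partial_z(b'+1) = \tfrac12(-i)(b'+1)_v = -\tfrac{i}{2}b''$. Hence $g_{z\bar z} = -\tfrac12\big[(b'+1)\cdot\tfrac12(b'-1)g - \tfrac{i}{2}b'' g\big] = -\tfrac{g}{4}\big[(b')^2 - 1 - i b''\big]$. Now from the ODE $b' = \sqrt{1-K\cos(2b)}$ we get $(b')^2 = 1 - K\cos(2b)$, so $(b')^2 - 1 = -K\cos(2b)$; differentiating $(b')^2 = 1 - K\cos 2b$ gives $2b'b'' = 2K\sin(2b)\cdot b'$, so $b'' = K\sin(2b)$. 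Therefore $(b')^2 - 1 - ib'' = -K\cos(2b) - iK\sin(2b) = -K e^{2ib}$, and $g_{z\bar z} = \tfrac{K}{4}e^{2ib}g$. On the other side, $2g g_z g_{\bar z} = 2g\cdot\tfrac12(b'-1)g\cdot(-\tfrac12)(b'+1)g = -\tfrac{g^3}{2}((b')^2-1) = \tfrac{g^3}{2}K\cos(2b)$. Then $(g^2-\overline g^2)g_{z\bar z} = (-2ie^{-2u}\cos 2b)\cdot\tfrac{K}{4}e^{2ib}g = -\tfrac{iK}{2}e^{-2u}\cos(2b)e^{2ib}g$; and since $g^3 = g\cdot g^2 = g\cdot(-i)e^{-2u}e^{2ib}$, we get $2gg_zg_{\bar z} = \tfrac{K}{2}\cos(2b)\cdot(-i)e^{-2u}e^{2ib}g$, which matches. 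This establishes the first identity.

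For the Hopf differential I would plug into the definition $q = \dfrac{g_z \overline{g}_z}{g^2 - \overline{g}^2}$. Note $\overline{g}_z = \overline{g_{\bar z}} = -\tfrac12(b'+1)\overline{g}$ (using that $b'+1$ is real). So $g_z\overline{g}_z = \tfrac12(b'-1)g\cdot(-\tfrac12)(b'+1)\overline{g} = -\tfrac14((b')^2-1)|g|^2 = \tfrac{K}{4}\cos(2b)|g|^2$. Since $|g|^2 = e^{-2u}$, this is $\tfrac{K}{4}e^{-2u}\cos(2b)$. Dividing by $g^2 - \overline{g}^2 = -2ie^{-2u}\cos(2b)$ gives $q = \dfrac{(K/4)e^{-2u}\cos 2b}{-2ie^{-2u}\cos 2b} = \dfrac{K}{-8i} = \dfrac{iK}{8}$, so $Q = \tfrac{iK}{8}dz^2$ as claimed.

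I do not expect a serious obstacle here; the computation is routine once the two ODE consequences $(b')^2 = 1 - K\cos(2b)$ and $b'' = K\sin(2b)$ are in hand. The only point requiring a little care is the cancellation of the factor $\cos(2b)$ in the Hopf differential: one should note $\cos(2b)$ vanishes only at isolated points (the zeros of $b' = 1$, equivalently where $\cos 2b = 1/K$ when $|K|\le 1$ — actually $\cos 2b$ need not vanish at all, but in any case the quotient extends continuously), so that $Q$ is globally the constant $\tfrac{iK}{8}dz^2$; alternatively one argues the identity on the dense open set where $\cos(2b)\neq 0$ and invokes continuity. One should also double-check the sign conventions in $g_z = \tfrac12(\partial_u - i\partial_v)$ and in the stereographic projection, but these only affect intermediate signs and not the final form.
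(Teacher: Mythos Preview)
Your proposal is correct and follows the same approach as the paper: a direct computation using the separated-variables form of $g$ together with the ODE consequences $(b')^2 = 1 - K\cos(2b)$ and $b'' = K\sin(2b)$. The paper's own proof simply states ``a direct calculation shows that $g$ satisfies the equation'' and records the intermediate form $Q = \dfrac{i(1-b'^{2})}{8\cos(2b)}\,dz^2$ before simplifying to $\dfrac{iK}{8}\,dz^2$, which is exactly the quotient you obtain after computing $g_z\overline{g}_z$ and $g^2-\overline{g}^2$.
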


\begin{proof}[Proof]
A direct calculation shows that $g$ satisfies the equation. Hence, the Hopf differential is given by

\begin{eqnarray*}
 Q & = & \dfrac{g_{z} \overline{g}_{z}}{g^{2} - \overline{g}^{2}} dz^{2} \\
           & = & \dfrac{i(1-b'^{2})}{8 \cos{(2b)}} dz^{2} \\
           & = & \dfrac{iK}{8} dz^{2}.
\end{eqnarray*}
\end{proof}

Thus the map $g$ induces a conformal minimal immersion $x = (x_{1},x_{2},x_{3})$ such that

\begin{eqnarray*}
 {x_{1}}_{z} & = & e^{-x_{3}} \dfrac{(\overline{g}^{2} - 1)g_{z}}{g^{2} - \overline{g}^{2}} = \dfrac{[1+ie^{-2u} e^{-2ib} ](1 - b')e^{ib}e^{i\pi/4}}{4e^{-u} \cos{(2b)}} e^{-x_{3}} \\
 {x_{2}}_{z} & = & ie^{x_{3}} \dfrac{(\overline{g}^{2} + 1)g_{z}}{g^{2} - \overline{g}^{2}} = -\dfrac{[1-ie^{-2u} e^{-2ib} ]i(1 - b')e^{ib}e^{i\pi/4}}{4e^{-u} \cos{(2b)}} e^{x_{3}} \\
 {x_{3}}_{z} & = & \dfrac{2\overline{g}g_{z}}{g^{2} - \overline{g}^{2}} = \dfrac{i(b'-1)}{2 \cos{(2b)}}.
\end{eqnarray*}
This application is an immersion since the metric induced by $x$ is given by

\begin{eqnarray*}
dw^{2} & = & \left\| x_{u} \right\|^{2} |dz|^{2} \\
       & = & \dfrac{K^{2}}{(1+b')^{2}} \cosh^{2}{(u)} |dz|^{2}.
\end{eqnarray*}
We obtain immediately that $x_{3}$ is a one-variable function and satisfies

$$x_{3}'(v) = \dfrac{1-b'(v)}{\cos{(2b(v))}} = \dfrac{K}{1+b'(v)}.$$

\begin{rem}
 For $K=0$, we get $x_{3} =$ cst and the image of $x$ is a point. We will always exclude this case in the sequel.
\end{rem}

By setting $x_{3}(0) = 0$, we choose $x_{3}$ among the primitive functions.

\begin{prop}
\begin{enumerate}
\item The function $x_{3}$ is defined on the whole $\mathbb{R}$ and is bijective ;
\item The function $x_{3}$ is odd ;
\item The function $x_{3}$ satisfies

$$x_{3}(v+W) = x_{3}(v) + x_{3}(W)$$
for all real number $v$.
\end{enumerate}
\end{prop}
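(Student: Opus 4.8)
The plan is to derive all three properties directly from the analogous properties of $b$ established in the previous proposition, using the closed formula $x_3'(v) = \dfrac{K}{1+b'(v)}$ together with $b'(v) = \sqrt{1 - K\cos(2b(v))}$ and the normalization $x_3(0) = 0$. Since $b'$ is bounded above and below by positive constants (we showed $\sqrt{r} < b' < 2$), the integrand $\dfrac{K}{1+b'}$ is a smooth bounded function of $v$ on all of $\mathbb{R}$, so $x_3(v) = \displaystyle\int_0^v \dfrac{K}{1+b'(t)}\,dt$ is well defined on $\mathbb{R}$; this gives the first half of item 1.

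For bijectivity, I would note that $x_3'(v) = \dfrac{K}{1+b'(v)}$ has constant sign equal to the sign of $K \neq 0$, so $x_3$ is strictly monotone. To get surjectivity onto $\mathbb{R}$ I would use the bound $|x_3'(v)| = \dfrac{|K|}{1+b'(v)} > \dfrac{|K|}{3}$, which forces $x_3(v) \to \pm\infty$ as $v \to \pm\infty$ (signs according to that of $K$); combined with continuity and strict monotonicity this yields that $x_3 : \mathbb{R} \to \mathbb{R}$ is a bijection.

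For item 2, I would use that $b$ is odd, hence $b'$ is even, hence $x_3'(v) = \dfrac{K}{1+b'(v)}$ is an even function; together with $x_3(0) = 0$ this gives that $x_3$ is odd (the primitive of an even function vanishing at $0$ is odd). Equivalently one can check that $v \mapsto -x_3(-v)$ has the same derivative as $x_3$ and the same value at $0$. For item 3, I would use the quasi-periodicity $b(v+W) = b(v) + \pi$, which upon differentiating gives $b'(v+W) = b'(v)$, so $x_3'$ is $W$-periodic; therefore the function $v \mapsto x_3(v+W) - x_3(v)$ has zero derivative and is constant, and evaluating at $v = 0$ identifies that constant as $x_3(W)$.

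None of the steps is a serious obstacle here — the proposition is essentially a transfer of the structural properties of $b$ through the explicit integral formula for $x_3$. The only point requiring a little care is the surjectivity in item 1, where one must produce an explicit lower bound on $|x_3'|$ (coming from the upper bound $b' < 2$) to guarantee that $x_3$ actually exhausts $\mathbb{R}$ rather than converging to a finite limit; everything else is a direct consequence of parity and quasi-periodicity of $b$ passed through differentiation and integration.
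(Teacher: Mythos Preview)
Your proof is correct and follows essentially the same route as the paper: parity of $b$ gives parity of $x_3'$ and hence oddness of $x_3$, and $W$-periodicity of $b'$ gives the additive relation in item 3. The only difference is that the paper dismisses item 1 as ``obvious'' from $x_3$ being a primitive with derivative of constant sign, whereas you actually supply the missing surjectivity argument via the lower bound $|x_3'| > |K|/3$; this is a welcome addition rather than a departure.
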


\begin{proof}[Proof]
\begin{enumerate}
\item Obvious because $x_{3}$ is a primitive of a continuous function, and its derivative has the sign of $K$.
\item The function $b$ is odd then $b'$ is even so $x_{3}'$ is even and $x_{3}$ is odd.
\item We have $x_{3}'(v+W) = x_{3}'(v)$ and the result follows.
\end{enumerate}
\end{proof}

Hence, the calculus proves that the functions

$$\begin{array}{rcl}
   x_{1}(u+iv) & = & \dfrac{\sqrt{2}}{2} (\cos{(b(v))} - \sin{(b(v))}) x_{3}' e^{-x_{3}} \sinh{(u)} \\
   x_{2}(u+iv) & = & \dfrac{\sqrt{2}}{2} (\cos{(b(v))} + \sin{(b(v))}) x_{3}' e^{x_{3}} \sinh{(u)}.
  \end{array}$$
satisfy the equations above.

\begin{theo}
\label{theohelicoide}
 Let $K$ be a real number such that $|K| < 1$ and $K \neq 0$, and $b$ the function defined by \eqref{eq8}. We define the function $x_{3}$ by
 
$$x_{3}' = \dfrac{K}{1+b'},~x_{3}(0)=0,$$
Then the map

$$x : u+iv \in \mathbb{C} \longmapsto \begin{pmatrix}
       \dfrac{\sqrt{2}}{2} (\cos{(b(v))} - \sin{(b(v))}) x_{3}' e^{-x_{3}} \sinh{(u)} \\
       \dfrac{\sqrt{2}}{2} (\cos{(b(v))} + \sin{(b(v))}) x_{3}' e^{x_{3}} \sinh{(u)} \\
       x_{3}(v)
      \end{pmatrix}$$
is a conformal minimal immersion whose Gauss map is $g : u+iv \in \mathbb{C} \longmapsto e^{-u} e^{ib(v)} e^{-i\pi/4}$. Moreover,

\begin{eqnarray}
(0,0,2x_{3}(W)) \ast x(u+iv) = x(u+i(v+2W)) \label{eq10}
\end{eqnarray}
for all $u, v \in \mathbb{R}$. The surface given by $x$ is called a \emph{helicoid} of parameter $K$ and will be denoted by $\mathcal{H}_{K}$.
\end{theo}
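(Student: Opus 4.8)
The statement bundles two independent facts: that the explicitly given map $x$ is a conformal minimal immersion of $\mathbb{C}$ into Sol$_{3}$ with Gauss map $g$, and that it satisfies the screw-motion relation \eqref{eq10}. The plan for the first is to rely on the converse part of Theorem \ref{theoweierstrasssol3}. The preceding propositions have already established that $g(z=u+iv) = e^{-u}e^{ib(v)}e^{-i\pi/4}$ solves the harmonic-map equation \eqref{eq6} on all of $\mathbb{C}$; consequently, on the open set $\Omega$ where $g$ is neither real nor pure imaginary, the map determined by the representation formulas \eqref{eq7} is a conformal minimal immersion — a priori with branch points — whose Gauss map is $g$. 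One then checks, as already indicated in the computations preceding the theorem, by direct substitution that the three coordinate functions displayed in the statement solve \eqref{eq7} on $\Omega$, repeatedly invoking the ODE \eqref{eq8}; in particular one must notice that the $\cos(2b)$ appearing in the denominators of the intermediate expressions for ${x_{1}}_{z}$, ${x_{2}}_{z}$ and ${x_{3}}_{z}$ cancels (for example $(1-b')/\cos(2b) = (1-b'^{2})/\bigl((1+b')\cos(2b)\bigr) = K/(1+b')$ by \eqref{eq8}), so that these coordinate functions are in fact smooth on the whole plane.

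Two points then remain for the first assertion. First, $g$ is real or pure imaginary exactly on the discrete union of vertical lines where $\cos(2b(v)) = 0$, so $\mathbb{C}\setminus\Omega$ has empty interior; since the coordinate functions are smooth across these lines and on the dense open set $\Omega$ the map $x$ is a conformal minimal immersion with Gauss map $g$, conformality, the identity $g = \sigma\circ\widehat{N}$, and the vanishing of the mean curvature all pass to $\mathbb{C}$ by continuity. Second, the induced metric has already been computed to be $K^{2}(1+b')^{-2}\cosh^{2}(u)\,|dz|^{2}$, which is everywhere positive because $K\neq 0$, $b'$ is bounded, and $\cosh(u)\geq 1$; hence $x$ has no branch points and is a genuine conformal minimal immersion of $\mathbb{C}$ into Sol$_{3}$ with Gauss map $g$.

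For the periodicity \eqref{eq10} I would simply expand both sides. By the multiplication law of Sol$_{3}$, the left-hand side $(0,0,2x_{3}(W))\ast x(u+iv)$ has coordinates $\bigl(e^{-2x_{3}(W)}x_{1}(u+iv),\,e^{2x_{3}(W)}x_{2}(u+iv),\,x_{3}(v)+2x_{3}(W)\bigr)$. For the right-hand side, $b(v+W) = b(v)+\pi$ gives $b(v+2W) = b(v)+2\pi$, so $\cos(b(v))$ and $\sin(b(v))$ are $2W$-periodic while $b'$, and hence $x_{3}' = K/(1+b')$, are $W$-periodic; and $x_{3}(v+W) = x_{3}(v)+x_{3}(W)$ gives $x_{3}(v+2W) = x_{3}(v)+2x_{3}(W)$, so $e^{\mp x_{3}(v+2W)} = e^{\mp 2x_{3}(W)}e^{\mp x_{3}(v)}$. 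Substituting $v+2W$ for $v$ in the explicit formulas for $x_{1}$ and $x_{2}$ and using these facts reproduces exactly the three coordinates of the left-hand side, which proves \eqref{eq10}.

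The main obstacle is the bookkeeping in the direct verification that the displayed coordinate functions satisfy the representation formulas \eqref{eq7}, together with the easy but necessary observation that the $\cos(2b)$-singularities are removable so that $x$ extends smoothly to all of $\mathbb{C}$; once that is settled, ruling out branch points via the metric and checking the screw invariance are short computations using the parity and periodicity of $b$ and $x_{3}$ recorded above.
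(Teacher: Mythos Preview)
Your proof is correct and follows essentially the same route as the paper: the paper likewise relies on the converse part of Theorem~\ref{theoweierstrasssol3} together with the computations preceding the statement (harmonicity of $g$, the derivation of $x_{1},x_{2},x_{3}$ from \eqref{eq7}, and the induced metric), and then proves \eqref{eq10} by the same direct expansion using $x_{3}(v+2W)=x_{3}(v)+2x_{3}(W)$ and the $2W$-periodicity of $\cos b$, $\sin b$, $b'$. You are actually more careful than the paper in two places it leaves implicit: the removability of the $\cos(2b)$-singularities via $(1-b')/\cos(2b)=K/(1+b')$, and the use of the metric $K^{2}(1+b')^{-2}\cosh^{2}(u)\,|dz|^{2}$ to exclude branch points.
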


\begin{proof}[Proof]
The equation \eqref{eq10} means that the helicoid is invariant by left multiplication by $(0,0,2x_{3}(W))$. Recall that we have the identity

$$x_{3}(v+2W) = x_{3}(v+W) + x_{3}(W) = x_{3}(v) + 2x_{3}(W)$$
for all real number $v$. Thus we get the result for the third coordinate and we prove in the same way that $e^{-2x_{3}(W)}x_{1}(u+iv) = x_{1}(u+i(v+2W))$ and $e^{2x_{3}(W)}x_{2}(u+iv) = x_{2}(u+i(v+2W))$.
\end{proof}

\begin{rem}
\begin{enumerate}
\item The surface $\mathcal{H}_{K}$ is embedded because $x_{3}$ is bijective. It is easy to see that it is even properly embedded.
\item The surfaces $\mathcal{H}_{K}$ and $\mathcal{H}_{-K}$ are related ; if we denote by an index $K$ (resp. $-K$) the datas describing $\mathcal{H}_{K}$ (resp. $\mathcal{H}_{-K}$), we get

$$\left\{ \begin{array}{rcl}
b_{-K}(v) & = & b_{K}(v+W/2) - \pi / 2 \\
{x_{3}}_{-K}(v) & = & -{x_{3}}_{K}(v+W/2) + {x_{3}}_{K}(W/2).
\end{array} \right.$$
In particular, ${x_{3}}_{-K}(W) = -{x_{3}}_{K}(W)$ and both surfaces have the same period $|{x_{3}}_{K}(W)|$. Finally,

$$x_{-K}(u+iv) = (0,0,{x_{3}}_{K}(W/2)) \ast \sigma^{3} \circ x_{K}(u+i(v+W/2)).$$
Thus, there exists an isometry of Sol$_{3}$ who puts $\mathcal{H}_{-K}$ on $\mathcal{H}_{K}$.
\end{enumerate}
\end{rem}

\begin{prop}
For every real number $T$, there exists a unique helicoid $\mathcal{H}_{K}$ (up to isometry, i.e. up to $K \leftmapsto -K$) whose period is $T$.
\end{prop}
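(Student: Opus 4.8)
The plan is to show that the map $K \mapsto |{x_3}_K(W_K)|$ realizes every positive real number, and that it is injective up to the symmetry $K \leftmapsto -K$. Recall from the remark above that ${x_3}_{-K}(W) = -{x_3}_K(W)$, so the period of $\mathcal{H}_{-K}$ equals that of $\mathcal{H}_K$; hence we may restrict to $K \in \;]0,1[$ and prove that

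$$P : K \in\; ]0,1[\; \longmapsto {x_3}_K(W_K) = \int_0^{W_K} \dfrac{K}{1+b_K'(v)}\,dv \in\; ]0,+\infty[$$

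is a bijection. Here both $W_K$ and $b_K$ depend on $K$, which is the source of the difficulty: it is cleaner to change variables from $v$ to $b$. Since $b_K$ is an increasing diffeomorphism of $\mathbb{R}$ with $b_K(W_K) = \pi$ and $dv = db/b_K' = db/\sqrt{1 - K\cos(2b)}$, one gets the explicit formula

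$$P(K) = \int_0^{\pi} \dfrac{K}{\bigl(1 + \sqrt{1 - K\cos(2b)}\bigr)\sqrt{1 - K\cos(2b)}}\,db,$$

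in which $K$ now appears only as an honest parameter in the integrand over a fixed interval.

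First I would establish continuity and monotonicity of $P$ on $]0,1[$. Continuity follows from dominated convergence (the integrand is jointly continuous and, for $K$ in a compact subinterval of $]0,1[$, bounded uniformly since $1 - K\cos(2b) \geq 1 - |K| > 0$). For strict monotonicity, I would differentiate under the integral sign: writing $t = \sqrt{1 - K\cos(2b)}$, the integrand is $K/(t(1+t))$, and one checks that $\partial_K\bigl(K/(t(1+t))\bigr) > 0$ pointwise for $b \in\;]0,\pi[$, $b \neq \pi/2$ — intuitively, increasing $K$ both multiplies by a larger factor $K$ and, on average over the symmetric interval, decreases the denominator. Symmetry of $\cos(2b)$ about $b = \pi/2$ can be used to pair up the points $b$ and $\pi - b$ and reduce the sign check to an elementary inequality in one variable. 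This gives that $P$ is a strictly increasing continuous function, hence a homeomorphism onto its image.

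Finally I would compute the two limits. As $K \to 0^+$, the integrand is $O(K)$ uniformly, so $P(K) \to 0$. As $K \to 1^-$, the integrand near $b = \pi/2$ behaves like $1/t^2 = 1/(1 - K\cos(2b))$, and with $K \to 1$ this is $\sim 1/(1-\cos 2b) \sim 1/(2(b-\pi/2)^2)$ near $b = \pi/2$, whose integral diverges; a lower bound by Fatou's lemma (or a direct estimate on a shrinking neighborhood of $\pi/2$) shows $P(K) \to +\infty$. Therefore $P$ is a bijection from $]0,1[$ onto $]0,+\infty[$, which is exactly the claim: given $T \neq 0$, there is a unique $K \in\;]0,1[$ with $P(K) = |T|$, and the corresponding helicoid (together with its mirror image $\mathcal{H}_{-K}$, isometric to it) has period $T$; for $T = 0$ the statement is vacuous since no helicoid has zero period. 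The main obstacle is the monotonicity step — the differentiation under the integral sign together with the pointwise sign analysis of $\partial_K$ of the integrand — while the two limits and continuity are routine once the change of variables to $b$ has been made.
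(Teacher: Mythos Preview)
Your approach is the same as the paper's: change variables $v\mapsto b$ to obtain the integral over the fixed interval $[0,\pi]$, differentiate in $K$ for monotonicity, and check the endpoint limits. Two remarks.

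First, the monotonicity step is cleaner than you make it sound. A direct computation (using $t^2=1-K\cos 2b$, hence $\partial_K t=(t^2-1)/(2Kt)$) gives
\[
\partial_K\,\frac{K}{t(1+t)}=\frac{1}{2t^{3}}=\frac{1}{2\bigl(1-K\cos 2b\bigr)^{3/2}}>0
\]
pointwise on $]0,\pi[$, so no pairing argument is needed and there is nothing special about $b=\pi/2$. This explicit derivative is exactly what the paper records.

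Second, your divergence analysis places the singularity at the wrong point. At $b=\pi/2$ one has $\cos 2b=-1$, so $t=\sqrt{1+K}$ is bounded away from $0$ and the integrand is perfectly regular there; in particular your estimate $1-\cos 2b\sim 2(b-\pi/2)^{2}$ is false (the left side is close to $2$, not to $0$). The actual blow-up as $K\to 1^{-}$ occurs at $b=0$ and $b=\pi$, where $\cos 2b=1$ and $t\to 0$. Near $b=0$ with $K=1$ one has $t=\sqrt{2}\,|\sin b|\sim\sqrt{2}\,b$, and since $1+t\to 1$ the integrand is $\sim 1/(\sqrt{2}\,b)$ (order $1/t$, not $1/t^{2}$ as you wrote), so the integral diverges logarithmically. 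With this correction your argument goes through; the paper simply evaluates the integral at $K=1$ and shows it diverges via the substitution $t=\tan(u/2)$.
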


\begin{proof}
We did notice that the period of the helicoid $\mathcal{H}_{K}$ is

\begin{eqnarray*}
2x_{3}(W) := 2{x_{3}}_{K}(W) & = & 2 \int_{0}^{W} \dfrac{K}{1+b'(s)} ds \\
		  & = & 2K \int_{0}^{\pi} \dfrac{du}{\sqrt{1-K\cos{(2u)}} (1+ \sqrt{1-K\cos{(2u)}})}
\end{eqnarray*}
with the change of variables $u = b(s)$ and $b(W) = \pi$. Seeing ${x_{3}}_{K}(W)$ as a function of the variable $K$, we get

\begin{eqnarray*}
\dfrac{\partial {x_{3}}_{K}(W)}{\partial K} & = &  \int_{0}^{\pi} \dfrac{1}{\sqrt{1-K \cos{(2u)}}^{3}} du
\end{eqnarray*}
(valid for $K$ in every compact set $[0,a] \subset [0,1[$, and so in $[0,1[$), and so the function $K \longmapsto {x_{3}}_{K}(W)$ is injective. Moreover, we have ${x_{3}}_{0}(W) = 0$ and

\begin{eqnarray*}
{x_{3}}_{1}(W) & = & \int_{0}^{\pi} \dfrac{1}{\sqrt{1-\cos{(2u)}}(1+\sqrt{1-\cos{(2u)}})} du \\
               & = & \int_{0}^{\pi} \dfrac{1}{\sqrt{2} \sin{(u)}(1+\sqrt{2}\sin{(u)})} du \\ 				   & = & \dfrac{1}{\sqrt{2}} \int_{0}^{\infty} \dfrac{1+t^{2}}{1+2\sqrt{2}t+t^{2}} dt = + \infty, 
\end{eqnarray*}
and so ${x_{3}}_{K}(W)$ is a bijection from $]0,1[$ onto $]0,+\infty[$.
\end{proof}

The vector field defined by

\begin{eqnarray*}
N & = & \dfrac{1}{1 + \left| g \right|^{2}} \left[ \begin{matrix} 2 \Re{(g)} \\ 2 \Im{(g)} \\ 1 - \left| g \right|^{2} \end{matrix} \right] \\
  & = &  \dfrac{\sqrt{2}}{2 \cosh{(u)}} \left[ \begin{matrix} \cos{(b)} + \sin{(b)} \\ \sin{(b)} - \cos{(b)} \\ \sqrt{2} \sinh{(u)} \end{matrix} \right].
\end{eqnarray*}
is normal to the surface. We get

\begin{eqnarray*}
\nabla_{x_{u}} N & = & - \sin{(2b)} \dfrac{\sinh{(u)}}{\cosh{(u)}} x_{u} + \left( \dfrac{1+b'}{K \cosh^{2}{(u)}} - \cos{(2b)} \right) x_{v},\\
\nabla_{x_{v}} N & = & \left( \dfrac{1+b'}{K \cosh^{2}{(u)}} - \cos{(2b)} \right) x_{u} + \sin{(2b)} \dfrac{\sinh{(u)}}{\cosh{(u)}} x_{v} ,
\end{eqnarray*}
and thus the Gauss curvature is given by

$$\mathcal{K} = -1 + \dfrac{1}{\cosh^{2}{(u)}} \left( \dfrac{2(1+b')\cos{(2b)}}{K} - \dfrac{(1+b')^{2}}{K^{2} \cosh^{2}{(u)}} + \sin^{2}{(2b)} \right).$$
In particular, the fundamental pieces of the helicoids have infinite total curvature since

$$\mathcal{K} dA = \left( -\dfrac{K^{2}}{(1+b')^{2}} \cosh^{2}{(u)} + \dfrac{2K\cos{(2b)}}{1+b'} - \dfrac{1}{\cosh^{2}{(u)}} + \dfrac{K^{2} \sin^{2}{(2b)}}{(1+b')^{2}} \right) dudv.$$

\vspace{1\baselineskip}

We notice that

\begin{eqnarray*}
x(-u+iv) & = & \begin{pmatrix} -x_{1}(u+iv) \\ -x_{2}(u+iv) \\ x_{3}(v) \end{pmatrix} \\
			& = & \sigma^{2} \circ x(u+iv),
\end{eqnarray*}
where $\sigma$ and $\tau$ are the isometries introduced in the first section : the helicoid $\mathcal{H}_{K}$ is symmetric by rotation of angle $\pi$ around the $x_{3}$-axis, which is included in the helicoid as the image by $x$ of the pure imaginary axis of $\mathbb{C}$. On this axis we have

$$g(0+iv) = -ie^{ib(v)}.$$
Hence, the straight line $\{(x,x,0)~|~x \in \mathbb{R} \}$ is included in  the helicoid as the image by $x$ of the real line. Along this line, we have

$$g(u+i0) = e^{-u}e^{-i \pi / 4}.$$
Then we notice that

\begin{eqnarray*}
x(u-iv) & = & \begin{pmatrix} x_{2}(u+iv) \\ x_{1}(u+iv) \\ -x_{3}(v) \end{pmatrix} \\
			& = & \sigma \tau \circ x(u+iv),
\end{eqnarray*}
Thus, $\mathcal{H}_{K}$ is symmetric by rotation of angle $\pi$ around the axis $\{(x,x,0)~|~x \in \mathbb{R} \}$.

\begin{rem}
The straight line $\{(x,x,0)~|~x \in \mathbb{R} \}$ is a geodesic of the helicoid. It's even a geodesic of Sol$_{3}$.
\end{rem}

Since the function $\sinh$ is odd, we deduce that

\begin{eqnarray*}
x(-u-iv) & = & \begin{pmatrix} -x_{2}(u+iv) \\ -x_{1}(u+iv) \\ -x_{3}(v) \end{pmatrix} \\
			& = & \sigma^{3} \tau \circ x(u+iv).
\end{eqnarray*}
Thus, $\mathcal{H}_{K}$ is symmetric by rotation of angle $\pi$ around the axis $\{(x,-x,0)~|~x \in \mathbb{R} \}$ (but this axis is not included in the surface).

\vspace{1\baselineskip}

The helicoid $\mathcal{H}_{K}$ has no more symmetry fixing the origin ; indeed if it had, it would exist a diffeomorphism $\phi$ of $\mathbb{C}$ such that $x \circ \phi = \sigma^{2} \circ x$ (I choose $\sigma^{2}$ as an example but it is the same idea for the other elements of the isotropy group of the origin of Sol$_{3}$). By composition, the surface would have the whole symmetries of the isotropy group. But if $x \circ \phi = \tau \circ x$, the decomposition $\phi = \phi_{1} + i\phi_{2}$ leads to

\begin{eqnarray*}
\begin{pmatrix} x_{1}(\phi_{1}(u+iv)+i\phi_{2}(u+iv)) \\ x_{2}(\phi_{1}(u+iv)+i\phi_{2}(u+iv)) \\ x_{3}(\phi_{2}(u+iv)) \end{pmatrix} & = & \begin{pmatrix} -x_{1}(u+iv) \\ x_{2}(u+iv) \\ x_{3}(v) \end{pmatrix}.
\end{eqnarray*}
Because $x_{3}$ is bijective, we get, $\phi_{2}(u+iv) = v$ for all $u, v$, and then we get at the same time $\sinh{(\phi_{1}(u+iv))} = \sinh{(u)}$ and $\sinh{(\phi_{1}(u+iv))} = -\sinh{(u)}$, what is impossible.

\begin{figure}[H]
    \centering
        \includegraphics[height=10cm,width=15cm]{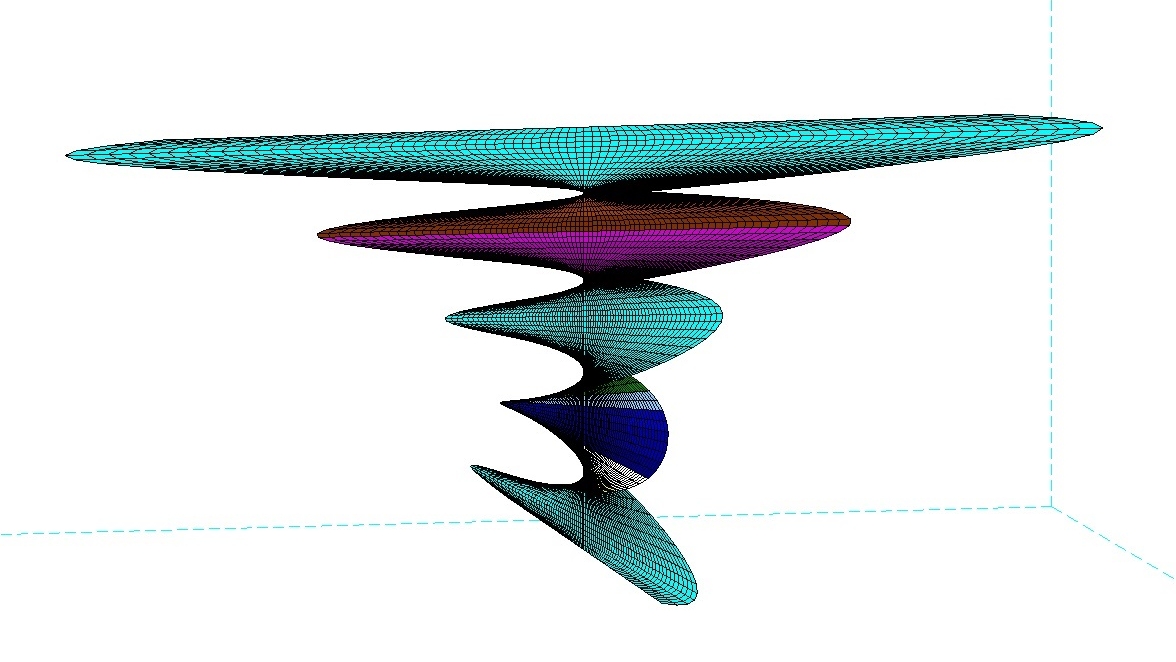}
    \caption{Helicoid for $K = 0.4$ with Scilab®}
\end{figure}

\section{Catenoids in Sol$_{3}$}

In this section we construct examples of minimal annuli in Sol$_{3}$. Let $\alpha \in~]-1;1[$. We start from a map $g$ defined on $\mathbb{C}$ by

$$g(z=u+iv) = -ie^{-u-\gamma(v)} e^{i\rho(v)},$$
where $\rho$ satisfies the following ODE :

\begin{eqnarray}
\rho' & = & \sqrt{1 - \alpha^{2} \sin^{2}{(2\rho)}},~~\rho(0) = 0~ \label{eq11}
\end{eqnarray}
and $\gamma$ is defined by

\begin{eqnarray}
\gamma' & = & -\alpha \sin{(2 \rho)},~~\gamma(0) = 0. \label{eq12}
\end{eqnarray}

\begin{prop}
\label{propperiodecat}
The map $\rho$ is well defined and has the following properties :
\begin{enumerate}
\item The function $\rho$ is an increasing diffeomorphism from $\mathbb{R}$ onto $\mathbb{R}$ ;
\item The function $\rho$ is odd ;
\item There exists a real number $V > 0$ such that

$$\forall v \in \mathbb{R},~~~\rho(v+V) = \rho(v) + \pi ;$$
\item The function $\rho$ satisfies $\rho(kV) = k \pi$ for all $k \in \mathbb{Z}$.
\end{enumerate}
\end{prop}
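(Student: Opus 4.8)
The plan is to follow verbatim the argument used for the function $b$ in the helicoid section, since \eqref{eq11} has exactly the same structure as \eqref{eq8}, with $\sqrt{1-K\cos(2b)}$ replaced by $\sqrt{1-\alpha^2\sin^2(2\rho)}$. First I would establish well-definedness: since $\alpha\in\,]-1,1[$ we have $\alpha^2<1$, hence $1-\alpha^2\sin^2(2\rho)\in[\,1-\alpha^2,1\,]\subset\,]0,1]$ for every value of $\rho$; the right-hand side of \eqref{eq11} is a smooth (in particular locally Lipschitz) function of $\rho$, so the Cauchy--Lipschitz theorem applies and yields a unique local solution with $\rho(0)=0$. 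Because $\rho'$ is pinched between the two positive constants $\sqrt{1-\alpha^2}$ and $1$, the maximal solution is defined on all of $\mathbb{R}$ and satisfies $\lim_{v\to\pm\infty}\rho(v)=\pm\infty$; combined with $\rho'>0$ this proves property 1 (increasing diffeomorphism of $\mathbb{R}$).

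For property 2 (oddness), I would introduce $\widehat{\rho}(v)=-\rho(-v)$ and check that it solves \eqref{eq11} with $\widehat{\rho}(0)=0$: indeed $\widehat{\rho}\,'(v)=\rho'(-v)=\sqrt{1-\alpha^2\sin^2(2\rho(-v))}=\sqrt{1-\alpha^2\sin^2(2\widehat{\rho}(v))}$, using that $\sin^2$ is even. Uniqueness in Cauchy--Lipschitz then forces $\widehat{\rho}=\rho$. For property 3, surjectivity of $\rho$ (property 1) gives a real number $V>0$ with $\rho(V)=\pi$; I would then set $\widetilde{\rho}(v)=\rho(v+V)-\pi$ and observe that $\widetilde{\rho}\,'(v)=\rho'(v+V)=\sqrt{1-\alpha^2\sin^2(2\rho(v+V))}=\sqrt{1-\alpha^2\sin^2(2\widetilde{\rho}(v)+2\pi)}=\sqrt{1-\alpha^2\sin^2(2\widetilde{\rho}(v))}$, so $\widetilde{\rho}$ solves \eqref{eq11} with $\widetilde{\rho}(0)=\rho(V)-\pi=0$; uniqueness again gives $\widetilde{\rho}=\rho$, i.e. $\rho(v+V)=\rho(v)+\pi$ for all $v$. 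Finally property 4 follows by an immediate induction on $k\ge 0$ from property 3 (with $\rho(0)=0$), and is extended to negative $k$ using oddness (property 2).

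There is essentially no serious obstacle here: every step is a routine application of uniqueness for the Cauchy problem together with a symmetry of the right-hand side of \eqref{eq11} ($v\mapsto -v$, $\rho\mapsto-\rho$, or $\rho\mapsto\rho+\pi$). The only point requiring a word of care is that the relevant symmetry for the periodicity statement is the invariance of $\sin^2(2\rho)$ under $\rho\mapsto\rho+\pi$, which is what pins down the translation constant $V$ (defined by $\rho(V)=\pi$) and makes property 3 come out with the stated $+\pi$ rather than some other multiple of $\pi/2$. If needed for later sections I would also record, exactly as in Corollary~\ref{corollaireb}, the half-period value obtained by combining properties 2 and 3, namely $\rho(V/2)=\pi/2$.
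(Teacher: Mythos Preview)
Your proposal is correct and follows exactly the same approach as the paper's own proof: well-definedness via Cauchy--Lipschitz with the uniform positive lower bound on $1-\alpha^2\sin^2(2\rho)$, then the auxiliary functions $\widehat\rho(v)=-\rho(-v)$ and $\widetilde\rho(v)=\rho(v+V)-\pi$ to deduce oddness and the $\pi$-periodicity relation by uniqueness. The paper simply declares property~4 ``obvious'' where you spell out the induction, but there is no substantive difference.
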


\begin{proof}[Proof]
Since $\alpha \in~]-1,1[$, there exists $r > 0$ such that $1 - \alpha^{2} \sin^{2}{(2\rho)} \in~]r,1]$ ; the Cauchy-Lipschitz theorem can be applied, and $\rho$ is well defined.
\begin{enumerate}
\item By \eqref{eq11} we have $\rho' > 0$ on its definition domain, and $\sqrt{r} < \rho' < 1$. So $\rho'$ is bounded by two positive constants, then $\rho$ is defined on the entire $\mathbb{R}$, and

$$\lim\limits_{v \to \pm \infty} \rho(v) = \pm \infty.$$

\item The function $\widehat{\rho} : v \longmapsto -\rho(-v)$ satisfies \eqref{eq11} with $\widehat{\rho}(0) = 0$ ; hence $\widehat{\rho} = \rho$ and $\rho$ is odd.

\item There exists $V > 0$ such that $\rho(V) = \pi$ ; Then the function $\widetilde{\rho} : v \longmapsto \rho(v+V) - \pi$ satisfies \eqref{eq11} with $\widetilde{\rho}(0) = 0$ ; hence $\widetilde{\rho} = \rho$.

\item Obvious.
\end{enumerate}
\end{proof}

\begin{cor}
\label{corollairerho}
\begin{enumerate}
\item We have $\rho(kV/2) = k \pi / 2$ for all $k \in 2 \mathbb{Z} +1$ ;
\item We have

$$\rho \left( -v + \dfrac{V}{2} \right) = -\rho(v) + \dfrac{\pi}{2}$$
for all $v \in \mathbb{R}$. In particular,

$$\rho \left( \dfrac{V}{4} \right) = \dfrac{\pi}{4},~~~\rho \left( \dfrac{3V}{4} \right) = \dfrac{3 \pi}{4}.$$
\end{enumerate}
\end{cor}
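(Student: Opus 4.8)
The plan is to mimic the proof of Corollary \ref{corollaireb}, using the periodicity and oddness properties of $\rho$ established in Proposition \ref{propperiodecat}, plus the extra symmetry of the ODE \eqref{eq11} coming from the fact that $\sin^2(2\rho)$ is invariant under $\rho \mapsto \pi/2 - \rho$.

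For the first item, I would write
\begin{eqnarray*}
\rho\left(\frac{V}{2}\right) & = & \rho\left(-\frac{V}{2} + V\right) \\
 & = & -\rho\left(\frac{V}{2}\right) + \pi
\end{eqnarray*}
using oddness and property $(3)$ of Proposition \ref{propperiodecat}; this gives $\rho(V/2) = \pi/2$, i.e. the case $k=1$. For general odd $k$, write $k = 1 + 2m$ and use $\rho(v+V) = \rho(v)+\pi$ iterated $m$ times (valid for all $v \in \mathbb{R}$, in particular $v = V/2$) to get $\rho(kV/2) = \rho(V/2) + m\pi = \pi/2 + m\pi = k\pi/2$. (For negative odd $k$ one uses oddness of $\rho$ as well.)

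For the second item, the key observation is that the map $\widehat{\rho} : v \longmapsto -\rho\left(-v + \frac{V}{2}\right) + \frac{\pi}{2}$ satisfies the same ODE \eqref{eq11} with the same initial condition: indeed $\widehat{\rho}(0) = -\rho(V/2) + \pi/2 = 0$ by the first item, and differentiating gives $\widehat{\rho}'(v) = \rho'\left(-v + \frac{V}{2}\right) = \sqrt{1 - \alpha^2 \sin^2\!\left(2\rho\left(-v+\frac{V}{2}\right)\right)}$, while $\sin^2\!\left(2\widehat{\rho}(v)\right) = \sin^2\!\left(\pi - 2\rho\left(-v+\frac{V}{2}\right)\right) = \sin^2\!\left(2\rho\left(-v+\frac{V}{2}\right)\right)$, so $\widehat{\rho}' = \sqrt{1 - \alpha^2 \sin^2(2\widehat{\rho})}$. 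By uniqueness in the Cauchy--Lipschitz theorem, $\widehat{\rho} = \rho$, which is exactly the claimed identity. Evaluating at $v = V/4$ gives $\rho(V/4) = -\rho(V/4) + \pi/2$, hence $\rho(V/4) = \pi/4$; then property $(3)$ gives $\rho(3V/4) = \rho(V/4 - V) + \pi$... wait, more directly $\rho(3V/4) = \rho(-V/4 + V/2) \cdot(-1)\cdots$; cleanest is to apply the just-proved identity at $v = 3V/4$: $\rho(-3V/4 + V/2) = \rho(-V/4) = -\rho(V/4) = -\pi/4$, so $-\pi/4 = -\rho(3V/4) + \pi/2$, giving $\rho(3V/4) = 3\pi/4$.

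I do not expect any real obstacle here: every step is a short application of uniqueness of ODE solutions together with the symmetries already recorded in Proposition \ref{propperiodecat}. The only point requiring a moment's care is checking that $\widehat{\rho}$ in the second item genuinely solves \eqref{eq11} — i.e. that the reflection $\rho \mapsto \pi/2 - \rho$ leaves $\sin^2(2\rho)$ invariant — and that its initial value is $0$, which is precisely what the first item provides.
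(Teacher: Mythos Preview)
Your proof is correct and follows essentially the same route as the paper: both parts use uniqueness for the ODE \eqref{eq11}, with part 1 exploiting oddness and the $V$-periodicity relation, and part 2 introducing the auxiliary function $v \mapsto \pi/2 - \rho(-v + V/2)$ and checking it solves \eqref{eq11} with initial value $0$. Your write-up is in fact more explicit than the paper's (you actually verify the ODE for $\widehat{\rho}$ and spell out the induction on $k$), and your derivation of $\rho(3V/4)=3\pi/4$ via $v=3V/4$ is a perfectly good alternative to simply combining $\rho(V/4)=\pi/4$ with the relation of part 2 or with property (3).
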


\begin{proof}[Proof]

\begin{enumerate}
\item We have
\begin{eqnarray*}
\rho \left( \dfrac{V}{2} \right) & = & \rho \left( -\dfrac{V}{2} + V \right) \\
		& = & - \rho \left( \dfrac{V}{2} \right) + \pi
\end{eqnarray*}
which gives the formula for $k = 1$, then we conclude easily.

\item The functions $\rho^{*} : v \longmapsto \pi / 2 - \rho(-v + V / 2)$ and $\rho$ satisfy equation \eqref{eq11} with $\rho^{*}(0) = \rho(0) = 0$, so $\rho^{*} = \rho$ and

\begin{eqnarray*}
\rho(V / 4) & = & \rho^{*}(V / 4) \\
            & = & \dfrac{\pi}{2} - \rho \left( \dfrac{\pi}{2} - \dfrac{\pi}{4} \right)
\end{eqnarray*}
and the result follows.
\end{enumerate}
\end{proof}

\begin{prop}
The function $g$ satisfies

$$(g^{2} - \overline{g}^{2})g_{z \overline{z}} = 2g g_{z} g_{\overline{z}}$$
and its Hopf differential is

\begin{eqnarray}
Q = -\dfrac{\alpha}{4} dz^{2}. \label{eq13}
\end{eqnarray}
\end{prop}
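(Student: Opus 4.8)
The plan is to verify both assertions by a direct computation, exactly parallel to the analogous Proposition for the helicoid's Gauss map (the one giving $Q=\tfrac{iK}{8}dz^{2}$). I would first write $g=-i\,e^{h}$ with $h(u,v)=-u-\gamma(v)+i\rho(v)$, so that $\overline g=i\,e^{\bar h}$, $g\overline g=e^{-2u-2\gamma}\neq 0$, and
$$g^{2}-\overline{g}^{2}=-2i\,e^{-2u-2\gamma}\sin(2\rho),\qquad g^{2}+\overline{g}^{2}=-2\,e^{-2u-2\gamma}\cos(2\rho).$$
Because $g=-i\,e^{h}$, the chain rule gives $g_{z}=g\,h_{z}$, $g_{\overline z}=g\,h_{\overline z}$ and $g_{z\overline z}=g\,(h_{z}h_{\overline z}+h_{z\overline z})$, so after dividing the equation $(g^{2}-\overline g^{2})g_{z\overline z}=2gg_{z}g_{\overline z}$ by $g$ it becomes the scalar identity $(g^{2}-\overline g^{2})\,h_{z\overline z}=(g^{2}+\overline g^{2})\,h_{z}h_{\overline z}$, i.e.
$$h_{z\overline z}=-i\cot(2\rho)\,h_{z}h_{\overline z}.$$

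Next I would compute the Wirtinger derivatives of $h$ from $h_{u}=-1$ and $h_{v}=-\gamma'+i\rho'$, obtaining $h_{z}=\tfrac12(\rho'-1+i\gamma')$, $h_{\overline z}=-\tfrac12(1+\rho'+i\gamma')$ and $h_{z\overline z}=\tfrac14(-\gamma''+i\rho'')$. Differentiating \eqref{eq11} and \eqref{eq12} yields $\rho''=-2\alpha^{2}\sin(2\rho)\cos(2\rho)$ and $\gamma''=-2\alpha\cos(2\rho)\,\rho'$, whence $h_{z\overline z}=\tfrac12\alpha\cos(2\rho)(\rho'+i\gamma')$. For the right-hand side the crucial remark is that $\rho'^{2}+\gamma'^{2}=1$, immediately from \eqref{eq11}–\eqref{eq12}; using it one factors $(\rho'+i\gamma')^{2}-1=-2i\alpha\sin(2\rho)(\rho'+i\gamma')$, so $h_{z}h_{\overline z}=\tfrac{i}{2}\alpha\sin(2\rho)(\rho'+i\gamma')$, and the identity $h_{z\overline z}=-i\cot(2\rho)\,h_{z}h_{\overline z}$ drops out.

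For the Hopf differential I would use $\overline g_{z}=\overline{g_{\overline z}}=\overline g\,\overline{h_{\overline z}}$, so $g_{z}\overline g_{z}=g\overline g\,h_{z}\overline{h_{\overline z}}$; expanding $h_{z}\overline{h_{\overline z}}=-\tfrac14(\rho'-1+i\gamma')(\rho'+1-i\gamma')$ and again invoking $\rho'^{2}+\gamma'^{2}=1$ kills the real part and leaves $h_{z}\overline{h_{\overline z}}=-\tfrac{i}{2}\gamma'$. Hence
$$Q=\frac{g_{z}\overline g_{z}}{g^{2}-\overline g^{2}}\,dz^{2}=\frac{-(i\gamma'/2)\,e^{-2u-2\gamma}}{-2i\,e^{-2u-2\gamma}\sin(2\rho)}\,dz^{2}=\frac{\gamma'}{4\sin(2\rho)}\,dz^{2}=-\frac{\alpha}{4}\,dz^{2},$$
the last step being \eqref{eq12}, which gives \eqref{eq13}. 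The only real obstacle is bookkeeping: one must keep the factor $e^{-2u-2\gamma}$ pulled out so it cancels everywhere, and one must spot the two factorizations that make all the $\sin(2\rho)$, $\cos(2\rho)$ terms collapse — both of which hinge on the single relation $\rho'^{2}+\gamma'^{2}=1$ built into the choice of $\rho$ and $\gamma$. Once that is in hand the computation is mechanical, which is why it can be dispatched as ``a direct calculation''.
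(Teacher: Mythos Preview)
Your proof is correct and follows the same approach as the paper: a direct computation, with the simplification hinging on the identity $\rho'^{2}+\gamma'^{2}=1$. The paper's intermediate expression $\tfrac{i(1-\rho'^{2}-\gamma'^{2}-2i\gamma')}{8\sin(2\rho)}$ is exactly your $h_{z}\overline{h_{\overline z}}/(g^{2}-\overline g^{2})$ before that identity is applied; your organization via $h=\log(ig)$ just makes the bookkeeping tidier.
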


\begin{proof}[Proof]
A direct calculation shows that $g$ satisfies the equation. Hence, the Hopf differential is given by

\begin{eqnarray*}
 \widetilde{Q} & = & \dfrac{i(1-\rho'^{2}-\gamma'^{2} - 2i \gamma')}{8 \sin{(2\rho)}} dz^{2} \nonumber \\
	       & = & -\dfrac{\alpha}{4} dz^{2}.
\end{eqnarray*}
\end{proof}

Thus the map $g$ induces a conformal minimal immersion $x = (x_{1},x_{2},x_{3})$ such that

\begin{eqnarray*}
 {x_{1}}_{z} = e^{-x_{3}} \dfrac{(\overline{g}^{2} - 1)g_{z}}{g^{2} - \overline{g}^{2}},~~~~{x_{2}}_{z} = ie^{x_{3}} \dfrac{(\overline{g}^{2} + 1)g_{z}}{g^{2} - \overline{g}^{2}},~~~~{x_{3}}_{z} = \dfrac{2\overline{g}g_{z}}{g^{2} - \overline{g}^{2}}.
\end{eqnarray*}
This application is an immersion since the metric induced by $x$ is given by

\begin{eqnarray*}
dw^{2} & = & \left\| x_{u} \right\|^{2} |dz|^{2} \\
       & = & ({F'}^{2} + \alpha^{2}) \cosh^{2}{(u + \gamma)} |dz|^{2} \\
       & = & \left( \dfrac{\alpha^{4} \sin^{2}{(2 \rho)}}{(1+\rho')^{2}} + \alpha^{2} \right) \cosh^{2}{(u + \gamma)} |dz|^{2} \\
       & = & \dfrac{2 \alpha^{2}}{1+\rho'} \cosh^{2}{(u + \gamma)} |dz|^{2},
\end{eqnarray*}

In particular,

$${x_{3}}_{z} = \dfrac{i\rho'-\gamma'-i}{2 \sin{(2\rho)}},$$
that is

$$\left\{ \begin{array}{rcl}
 {x_{3}}_{u} & = & 2 \Re{({x_{3}}_{z})} = -\dfrac{\gamma'}{\sin{(2\rho)}} = \alpha \\
 {x_{3}}_{v} & = & -2\Im{({x_{3}}_{z})} = \dfrac{1-\rho'}{\sin{(2\rho)}} = \dfrac{\alpha^{2}\sin{(2\rho)}}{1+\rho'}.
\end{array} \right.$$
Thus

\begin{eqnarray*}
 x_{3}(u+iv) & = & \alpha u + \alpha^{2} \int^{v} \dfrac{\sin{(2\rho(s))}}{1+\rho'(s)} ds.
\end{eqnarray*} 
Here we have to choose an initial condition ; we set
 
$$F(v) = \alpha^{2} \int_{0}^{v} \dfrac{\sin{(2\rho(s))}}{1+\rho'(s)} ds$$
and define

\begin{eqnarray*}
 x_{3}(u+iv) & = & \alpha u + F(v).
\end{eqnarray*}
The function $F$ is well defined on the whole $\mathbb{R}$.

\begin{prop}
\label{propperiodeF}
The function $F$ is even and $V$-periodic.
\end{prop}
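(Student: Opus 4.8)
The plan is to reduce everything to parity and periodicity properties of the integrand
$$h(s) := \dfrac{\sin(2\rho(s))}{1+\rho'(s)},$$
for which $F(v) = \alpha^{2}\int_{0}^{v} h(s)\,ds$, so that $F' = \alpha^{2} h$ and $F(0) = 0$. All the needed information about $\rho$ has already been assembled in Proposition \ref{propperiodecat}.

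First I would prove that $F$ is even. Since $\rho$ is odd, its derivative $\rho'$ is even, and hence $h(-s) = \sin(-2\rho(s))/(1+\rho'(s)) = -h(s)$: the integrand is an odd function. Therefore $F' = \alpha^{2} h$ is odd, and together with $F(0) = 0$ this forces $F$ to be even. (Equivalently, one substitutes $s \mapsto -s$ in the defining integral and observes that the sign change from $ds$ and the sign change from $h(-s) = -h(s)$ combine to leave the integral unchanged.)

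For $V$-periodicity I would first note that $h$ is itself $V$-periodic: differentiating $\rho(v+V) = \rho(v)+\pi$ gives $\rho'(v+V) = \rho'(v)$, while $\sin(2\rho(v+V)) = \sin(2\rho(v)+2\pi) = \sin(2\rho(v))$, so $h(v+V) = h(v)$. Consequently $F(v+V) - F(v) = \alpha^{2}\int_{v}^{v+V} h(s)\,ds$ is independent of $v$ (the integral of a $V$-periodic function over any interval of length $V$), and it is enough to show $\int_{0}^{V} h(s)\,ds = 0$. Combining the oddness and the $V$-periodicity of $h$ yields $h(V-s) = h(-s) = -h(s)$, i.e. $h$ is antisymmetric about $s = V/2$; splitting $\int_{0}^{V}$ at $V/2$ and applying the substitution $s \mapsto V-s$ on $[V/2,V]$ shows the two halves cancel, so the integral vanishes. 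Hence $F(v+V) = F(v)$ for all $v$, which completes the proof.

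I do not expect a genuine obstacle here: the statement is bookkeeping with the parity and quasi-periodicity of $\rho$ from Proposition \ref{propperiodecat}. The only delicate point is keeping track of the two sign changes (one from $\rho$ being odd, one from the orientation-reversing substitution) so that they reinforce rather than cancel; routing the whole argument through the single auxiliary function $h$ makes this transparent and also reuses the same computation for both the evenness and the periodicity.
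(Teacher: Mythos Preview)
Your proof is correct. The treatment of evenness is identical to the paper's: both observe that $\rho$ odd and $\rho'$ even make $F'$ odd, whence $F$ is even since $F(0)=0$.

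Where you diverge is in showing $\int_{0}^{V} h(s)\,ds = 0$. The paper performs the substitution $u = \sin(2\rho(s))$, splitting $[0,V]$ into the three subintervals determined by $V/4$ and $3V/4$ (using $\rho(kV/4)=k\pi/4$) so that $u$ is monotone on each piece; the resulting integrand in $u$ is odd on $[-1,1]$ and therefore integrates to zero. Your route is shorter and more conceptual: from the $V$-periodicity and oddness of $h$ you extract the antisymmetry $h(V-s)=-h(s)$ about $s=V/2$, and a single reflection $s\mapsto V-s$ on $[V/2,V]$ makes the two halves cancel. This avoids the explicit change of variables and the three-piece split, and it reuses the same parity facts that gave evenness. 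The paper's computation, on the other hand, has the minor advantage of producing an explicit closed-form integrand in $u$ (which could be useful if one wanted quantitative information about $F$), but for the bare periodicity statement your argument is the cleaner one.
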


\begin{proof}[Proof]
The function $F'$ is odd because $\rho$ is odd and $\rho'$ is even, so $F$ is even. Then we get

\begin{eqnarray*}
F'(v+V) & = & \alpha^{2} \dfrac{\sin{(2\rho(v)+2\pi)}}{1+\rho'(v)} \\
        & = & F'(v),
\end{eqnarray*}
so there exists a constant $C$ such that

$$F(v+V) = F(v) + C$$
for all $v \in \mathbb{R}$. By evaluating in zero, we get $C = F(V)$, i.e.

\begin{eqnarray*}
C & = & \alpha^{2} \int_{0}^{V} \underbrace{\dfrac{\sin{(2\rho(s))}}{1+\rho'(s)}}_{:= H(s)} ds \\
  & = & \alpha^{2} \left\{ \int_{0}^{V/4} H(s) ds + \int_{V/4}^{3V/4} H(s) ds + \int_{3V/4}^{V} H(s) ds \right\} \\
	    & := & \sum_{k=0}^{2} L_{k}(\alpha).
\end{eqnarray*}
We can now do the change of variables $u = \sin{(2 \rho(s))}$ in each integral $L_{k}(\alpha)$, with

$$du = 2 \rho'(s) \cos{(2 \rho(s))} ds = 2 (-1)^{k} \sqrt{(1-\alpha^{2}u^{2}) (1-u^{2})} ds.$$
Thus,

\begin{eqnarray*}
C & = & \alpha^{2} \int_{-1}^{1} \dfrac{u~du}{(1+\sqrt{1-\alpha^{2} u^{2}}) \sqrt{(1-\alpha^{2} u^{2}) (1-u^{2})}} \\
  & = & 0
\end{eqnarray*}
and $F$ is $V$-periodic.
\end{proof}

\begin{prop}
\label{periodgamma}
The function $\gamma$ is even and $V$-periodic.
\end{prop}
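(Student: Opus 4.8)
The plan is to follow the pattern of Proposition \ref{propperiodeF}, since $\gamma$ is obtained from $\rho$ by a construction of the same type. For the parity statement, I would simply observe from \eqref{eq12} that $\gamma'(v) = -\alpha \sin(2\rho(v))$; as $\rho$ is odd by Proposition \ref{propperiodecat}, the function $v \mapsto \sin(2\rho(v))$ is odd, hence so is $\gamma'$, and therefore its primitive $\gamma$ with $\gamma(0) = 0$ is even.

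For $V$-periodicity, I would first use $\rho(v+V) = \rho(v) + \pi$ (Proposition \ref{propperiodecat}) to get
$$\gamma'(v+V) = -\alpha \sin(2\rho(v) + 2\pi) = -\alpha \sin(2\rho(v)) = \gamma'(v),$$
so that $v \mapsto \gamma(v+V) - \gamma(v)$ is constant and, evaluating at $v = 0$, equal to $\gamma(V)$. It then remains to show $\gamma(V) = 0$, i.e. that $\int_0^V \sin(2\rho(s))\,ds = 0$. Here I would reuse the change of variables of Proposition \ref{propperiodeF}: split the integral at $V/4$ and $3V/4$ (the points where $\rho$ equals $\pi/4$ and $3\pi/4$, by Corollary \ref{corollairerho}), substitute $u = \sin(2\rho(s))$ on each piece — so that $du = 2\rho'(s)\cos(2\rho(s))\,ds = 2(-1)^k \sqrt{(1-\alpha^2 u^2)(1-u^2)}\,ds$ with the same sign pattern $+,-,+$ — and recombine the three contributions into
$$\gamma(V) = -\alpha \int_{-1}^{1} \frac{u\,du}{2\sqrt{(1-\alpha^2 u^2)(1-u^2)}} = 0,$$
the integrand being odd. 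A shorter alternative, which I find cleaner, is to note that Corollary \ref{corollairerho}(2) together with the oddness of $\rho$ gives $\rho(v + V/2) = \rho(v) + \pi/2$, hence $\sin(2\rho(v+V/2)) = -\sin(2\rho(v))$; substituting $s = w + V/2$ in $\int_{V/2}^V \sin(2\rho(s))\,ds$ then shows it is the negative of $\int_0^{V/2} \sin(2\rho(s))\,ds$, so the full integral over $[0,V]$ vanishes.

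There is no real obstacle here: the evenness is immediate, and the vanishing $\gamma(V) = 0$ is the exact analogue of the integral already evaluated for $F$ — indeed both $\gamma' = -\alpha\sin(2\rho)$ and $F' = \alpha^2 \sin(2\rho)/(1+\rho')$ produce, after the change of variable $u = \sin(2\rho(s))$, an integrand on $[-1,1]$ that is odd in $u$. If anything, the only point requiring a line of care is justifying the (improper but convergent) substitution across the zeros of $\cos(2\rho)$ at $V/4$ and $3V/4$, exactly as in Proposition \ref{propperiodeF}.
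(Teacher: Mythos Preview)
Your proposal is correct and follows exactly the approach the paper intends: the paper's proof of this proposition is the single sentence ``We prove the proposition in exactly the same way than the function $F$,'' and you have faithfully carried out that program (evenness from $\gamma'$ odd, then $\gamma'(v+V)=\gamma'(v)$ and the vanishing of $\gamma(V)$ via the same substitution $u=\sin(2\rho(s))$ split at $V/4,\,3V/4$). Your alternative using $\rho(v+V/2)=\rho(v)+\pi/2$ is a neat shortcut not in the paper; note only that after recombining the three pieces the constant in front of the integral should be $-\alpha$ rather than $-\alpha/2$, which of course does not affect the conclusion.
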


\begin{proof}[Proof]
We prove the proposition in exactly the same way than the function $F$.
\end{proof}

The two other equations becomes

\begin{eqnarray*}
 {x_{1}}_{z} & = & e^{-x_{3}} \dfrac{(e^{-u-\gamma-i\rho} + e^{u+\gamma+i\rho})(1-\rho'-i\gamma')}{4 \sin{(2\rho)}} \\
 {x_{2}}_{z} & = & -e^{x_{3}} \dfrac{(e^{u+\gamma+i\rho} - e^{-u-\gamma-i\rho})(i-i\rho'+\gamma')}{4 \sin{(2\rho)}}.
\end{eqnarray*}
Those equations lead to

\begin{eqnarray*}
x_{1} & = & e^{-\alpha u-F} \Big[ \dfrac{e^{u + \gamma}}{2(1-\alpha)}(\cos{(\rho)}F'- \alpha \sin{(\rho)}) - \dfrac{e^{-u - \gamma}}{2(1+\alpha)}(\alpha \sin{(\rho) + \cos{(\rho)}F'}) \Big]~; \\
x_{2} & = & e^{\alpha u+F} \Big[ \dfrac{-e^{u + \gamma}}{2(1+\alpha)}(\alpha \cos{(\rho)} + F'\sin{(\rho)}) + \dfrac{e^{-u - \gamma}}{2(\alpha-1)}(\alpha \cos{(\rho)} - F' \sin{(\rho)}) \Big].
\end{eqnarray*}

\begin{rem}
If $\alpha = 0$, $x(\mathbb{C}) = \{ 0 \}$. This case will be excluded in the sequel.
\end{rem}

\begin{theo}
\label{theocatenoide}
 Let $\alpha$ be a real number such that $|\alpha| < 1$ and $\alpha \neq 0$, and $\rho$ and $\gamma$ the functions defined by \eqref{eq11} and \eqref{eq12}. We define the function $F$ by
 
$$F(v) = \alpha^{2} \int_{0}^{v} \dfrac{\sin{(2\rho(s))}}{1+\rho'(s)} ds.$$
Then the map $x : \mathbb{C} \longrightarrow \emph {Sol}_{3}$ defined by

$$\begin{pmatrix}
 e^{-\alpha u-F} \Big[ \dfrac{e^{u + \gamma}}{2(1-\alpha)}(\cos{(\rho)}F'-\alpha \sin{(\rho)}) - \dfrac{e^{-u - \gamma}}{2(1+\alpha)}(\alpha \sin{(\rho) + \cos{(\rho)}F'}) \Big] \\
 e^{\alpha u+F} \Big[ \dfrac{-e^{u + \gamma}}{2(1+\alpha)}(\alpha \cos{(\rho)} + F'\sin{(\rho)}) + \dfrac{e^{-u - \gamma}}{2(\alpha-1)}(\alpha \cos{(\rho)} - F' \sin{(\rho)}) \Big] \\
 \alpha u + F
\end{pmatrix}$$
is a conformal minimal immersion whose Gauss map is $g : u+iv \in \mathbb{C} \longmapsto -ie^{-u-\gamma(v)} e^{i\rho(v)}$. Moreover,

\begin{eqnarray}
x(u+i(v+2V)) = x(u+iv) \label{eq14}
\end{eqnarray}
for all $u, v \in \mathbb{R}$. The surface given by $x$ is called a \emph{catenoid} of parameter $\alpha$ and will be denoted by $\mathcal{C}_{\alpha}$.
\end{theo}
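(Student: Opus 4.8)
The plan is to recognise the statement as an instance of the converse part of Theorem~\ref{theoweierstrasssol3}: the only real work is to integrate the representation formulas \eqref{eq7} explicitly and to read off the period. We have already checked that $g$ satisfies $(g^{2}-\overline{g}^{2})g_{z\overline{z}}=2gg_{z}g_{\overline{z}}$, hence the harmonic equation \eqref{eq6}, with Hopf differential $Q=-\frac{\alpha}{4}\,dz^{2}$; moreover $|g|=e^{-u-\gamma}$ is never $0$ nor $\infty$. Since $\mathbb{C}$ is simply connected, Theorem~\ref{theoweierstrasssol3} tells us that any map satisfying \eqref{eq7} is a conformal minimal immersion with Gauss map $g$, provided it is well defined and unbranched. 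Thus it suffices to show (i) that the explicit map $x$ of the statement satisfies \eqref{eq7}, (ii) that it has nowhere vanishing differential, and (iii) the period relation \eqref{eq14}.

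For (i), since the right-hand side of the third equation in \eqref{eq7} does not involve $x_{3}$, I would integrate it first: the computation preceding the theorem gives ${x_{3}}_{u}=\alpha$ and ${x_{3}}_{v}=\frac{\alpha^{2}\sin(2\rho)}{1+\rho'}$, both smooth on all of $\mathbb{C}$ --- the apparent singularity of \eqref{eq7} along the lines where $g$ is real or pure imaginary, i.e.\ where $\sin(2\rho)=0$, is removable because there $\gamma'=0$ and $\rho'=1$ --- so $x_{3}=\alpha u+F(v)$ with $F$ as defined. Substituting this $x_{3}$ into the remaining two equations of \eqref{eq7} yields the integrands ${x_{1}}_{z},{x_{2}}_{z}$ displayed just before the theorem, and one then verifies by direct differentiation that the two explicit functions $x_{1},x_{2}$ of the statement satisfy ${x_{j}}_{z}=$ the corresponding integrand, using systematically the ODEs $\rho'=\sqrt{1-\alpha^{2}\sin^{2}(2\rho)}$ and $\gamma'=-\alpha\sin(2\rho)$ from \eqref{eq11}--\eqref{eq12}, the identity ${\rho'}^{2}+{\gamma'}^{2}=1$, and the two forms $F'=\frac{\alpha^{2}\sin(2\rho)}{1+\rho'}=\frac{1-\rho'}{\sin(2\rho)}$ of the derivative of $F$. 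Since $x_{1},x_{2},x_{3}$ are manifestly real-valued, as soon as $\partial_{z}x_{j}$ matches one automatically gets $\partial_{\overline{z}}x_{j}=\overline{\partial_{z}x_{j}}$, so $x$ really is a (single-valued, smooth) solution of \eqref{eq7} on $\mathbb{C}$. For (ii), the induced metric computed before the theorem, $dw^{2}=\frac{2\alpha^{2}}{1+\rho'}\cosh^{2}(u+\gamma)\,|dz|^{2}$, is everywhere positive (here $\alpha\neq0$ and $\rho'>0$), so $x$ is an immersion with no branch points, and Theorem~\ref{theoweierstrasssol3} then gives that $x$ is conformal and minimal with Gauss map $g$.

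For (iii), I would use Proposition~\ref{propperiodecat}(3), namely $\rho(v+V)=\rho(v)+\pi$, which yields $\rho(v+2V)=\rho(v)+2\pi$ and hence $\cos(\rho(v+2V))=\cos(\rho(v))$, $\sin(\rho(v+2V))=\sin(\rho(v))$; combined with the $V$-periodicity of $\gamma$ and $F$ (Propositions~\ref{periodgamma} and~\ref{propperiodeF}) and of $F'$, substitution into the three explicit formulas of the statement gives $x_{j}(u+i(v+2V))=x_{j}(u+iv)$ for $j=1,2,3$, which is \eqref{eq14}. (In fact the $V$-shift already negates $x_{1}$ and $x_{2}$ while fixing $x_{3}$, i.e.\ it realises $\sigma^{2}\circ x$, but only \eqref{eq14} is asserted here.) The main obstacle in this proof is the verification of step (i): checking that the closed-form expressions for $x_{1}$ and $x_{2}$ differentiate back to the Weierstrass integrands is a long and delicate computation in which one must repeatedly invoke the defining ODEs of $\rho$ and $\gamma$ together with ${\rho'}^{2}+{\gamma'}^{2}=1$. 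A minor additional point requiring care is the removability of the singularity of \eqref{eq7} along the lines $\sin(2\rho)=0$, which is what guarantees that $x$ extends smoothly across them.
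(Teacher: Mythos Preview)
Your proposal is correct and follows essentially the same approach as the paper: the paper performs all of the computations you describe in steps (i) and (ii)---harmonicity of $g$, integration of $x_{3}$, derivation of the explicit $x_{1},x_{2}$, and the metric computation---in the text \emph{preceding} the theorem statement, so that the formal proof reduces to the single line ``The periodicity of $\mathcal{C}_{\alpha}$ is an application of Propositions~\ref{propperiodecat}, \ref{propperiodeF} and~\ref{periodgamma}'', which is exactly your step (iii). Your additional remark on removability of the apparent singularity along $\sin(2\rho)=0$ is a point of care the paper leaves implicit.
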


\begin{proof}[Proof]
The periodicity of $\mathcal{C}_{\alpha}$ is an application of propositions \ref{propperiodecat}, \ref{propperiodeF} and \ref{periodgamma}.
\end{proof}

\begin{rem}
The surfaces $\mathcal{C}_{\alpha}$ and $\mathcal{C}_{-\alpha}$ are related ;  if we denote by an index $\alpha$ (resp. $-\alpha$) the datas describing $\mathcal{C}_{\alpha}$ (resp. $\mathcal{C}_{-\alpha}$), we get

$$\left\{ \begin{array}{rcl}
\rho_{-\alpha} & = & \rho_{\alpha} \\
F_{-\alpha} & = & F_{\alpha} \\
\gamma_{-\alpha} & = & -\gamma_{\alpha} \\
\end{array} \right.$$
Thus, we get

$$x_{-\alpha}(-u+iv) = \sigma^{2} \circ x_{\alpha}(u+iv).$$
In particular, there exists an orientation-preserving isometry of Sol$_{3}$ fixing the origin who puts $\mathcal{C}_{\alpha}$ on $\mathcal{C}_{-\alpha}$.

\end{rem}

Now we show that the catenoids are embedded :

\begin{prop}
\label{courbeconvexe}
For all $\lambda \in \mathbb{R}$, the intersection of $\mathcal{C}_{\alpha}$ with the plane $\{ x_{3} = \lambda \}$ is a non-empty closed embedded convex curve.
\end{prop}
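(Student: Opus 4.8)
The plan is to slice $\mathcal{C}_\alpha$ by the plane $P_\lambda=\{x_3=\lambda\}$, identify the slice with a plane curve, and prove convexity and embeddedness by a monotone–turning argument. First I would observe that $P_\lambda$, endowed with the metric \eqref{eq2} restricted to it, is flat: in the coordinates $\xi=e^{\lambda}x_1$, $\eta=e^{-\lambda}x_2$ the metric reads $d\xi^2+d\eta^2$, and along $P_\lambda$ one has $E_1=\partial_\xi$, $E_2=\partial_\eta$. Since $(x_1,x_2)\mapsto(\xi,\eta)$ is an affine isomorphism, it is enough to prove that the slice, read in the Euclidean $(\xi,\eta)$–plane, is a non‑empty embedded convex curve. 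Next, by Theorem \ref{theocatenoide} we have $x_3(u+iv)=\alpha u+F(v)$, so $x^{-1}(P_\lambda)=\{\,u=u(v):=(\lambda-F(v))/\alpha\,\}$ is a single curve, a graph over the $v$–axis; hence $\mathcal{C}_\alpha\cap P_\lambda$ is the image of $c(v):=x(u(v)+iv)$, $v\in\mathbb{R}$, which is non‑empty (take $v=0$). Because $F$ is $V$‑periodic (Proposition \ref{propperiodeF}), $u(v+2V)=u(v)$, and then \eqref{eq14} gives $c(v+2V)=c(v)$, so $c$ descends to a closed curve on $\mathbb{R}/2V\mathbb{Z}$.

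The heart of the argument is the computation of the tangent direction of $c$. The velocity $c'(v)=-\tfrac{F'(v)}{\alpha}\,x_u+x_v$ is tangent to $\mathcal{C}_\alpha$ and, since $x_3$ is constant along $c$, tangent to $P_\lambda$; it is nowhere zero because $x$ is a conformal immersion ($x_u\perp x_v$ with equal nonzero norms, as seen from the induced metric in Theorem \ref{theocatenoide}). Since $E_3=\nabla x_3$, the vector $c'(v)$ is therefore proportional to $N\times E_3$, the cross product taken in the oriented orthonormal frame $(E_1,E_2,E_3)$. From \eqref{eq4} and $g=-ie^{-u-\gamma}e^{i\rho}$ one computes
$$N=\frac{1}{\cosh(u+\gamma)}\bigl[\sin\rho,\;-\cos\rho,\;\sinh(u+\gamma)\bigr],\qquad N\times E_3=\frac{1}{\cosh(u+\gamma)}\bigl[-\cos\rho,\;-\sin\rho,\;0\bigr].$$
In particular $N$ is proportional to $E_3$ only if $\sin\rho=\cos\rho=0$, which never occurs, so $N$ and $E_3$ are everywhere linearly independent, $N\times E_3\neq 0$ at every point, and the slice is a regular curve throughout. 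Because $E_1=\partial_\xi$ and $E_2=\partial_\eta$ on $P_\lambda$, this shows that in the Euclidean coordinates the tangent line to $c$ at $c(v)$ is directed by $(\cos\rho(v),\sin\rho(v))$.

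It remains to conclude. The unit tangent vector of $c$ has angle $\rho(v)$ up to a globally constant additive term ($0$ or $\pi$), so the geodesic curvature of $c$ in $P_\lambda$ equals $\rho'(v)/\lvert c'(v)\rvert>0$: it has constant sign, i.e. $c$ is locally convex. Moreover $\rho$ is a strictly increasing diffeomorphism of $\mathbb{R}$ with $\rho(v+2V)=\rho(v)+2\pi$ (Proposition \ref{propperiodecat}), so over one period $[0,2V]$ the unit tangent makes exactly one positive turn; thus $c$ is a closed regular plane curve, locally convex, with rotation index $1$. By the classical characterization of convex curves (a locally convex closed plane curve with rotation index $\pm1$ is embedded and bounds a convex region), $c(\mathbb{R}/2V\mathbb{Z})$ is a simple closed convex curve in the $(\xi,\eta)$–plane; transporting it back through the affine map $(\xi,\eta)\mapsto(x_1,x_2)$ yields that $\mathcal{C}_\alpha\cap\{x_3=\lambda\}$ is a non‑empty closed embedded convex curve.

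The main obstacle is the tangent–direction step: recognising that, read in the flat coordinates of $P_\lambda$, the unit tangent of the slice is exactly $(\cos\rho,\sin\rho)$. One has to be careful to compute $N$ in the orthonormal frame $(E_1,E_2,E_3)$ rather than in the coordinate frame $(\partial_1,\partial_2,\partial_3)$, and to verify the transversality of $\mathcal{C}_\alpha$ and $P_\lambda$ so that the slice is a regular curve at every point. Once this is established, non‑emptiness, closedness, the constant sign of the curvature, and the single full turn of the tangent all follow immediately from the properties of $\rho$ and $F$ already proved.
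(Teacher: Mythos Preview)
Your proof is correct, and it reaches the same core fact as the paper---that in flat coordinates on $P_\lambda$ the tangent direction of the slice is $(\cos\rho,\sin\rho)$---but by a genuinely different route. The paper parametrises the slice exactly as you do, then computes $\mathfrak c_1'(t)$ and $\mathfrak c_2'(t)$ by brute force from the explicit formulas of Theorem~\ref{theocatenoide}, obtaining (after a long calculation) $\mathfrak c_1'\propto\cos\rho$ and $\mathfrak c_2'\propto\sin\rho$; it then argues half by half, using that $\mathfrak c_1$ is strictly monotone on $(-V/2,V/2)$ (so that half is a graph, hence embedded), that $d\mathfrak c_2/d\mathfrak c_1=\tan\rho$ is monotone there (so that half is convex), and finally the central symmetry $\mathfrak c(t+V)=-\mathfrak c(t)$ to close up the curve. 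Your argument replaces the long coordinate computation by the clean geometric observation $c'\parallel N\times E_3$ (which is immediate once one sees $c'$ is orthogonal to both $N$ and $E_3=\nabla x_3$), and replaces the half-by-half analysis by the global statement that the turning angle $\rho$ is strictly increasing with total variation $2\pi$ over a period, together with the classical fact that a closed locally convex plane curve of rotation index $\pm1$ is simple and convex. Your approach is shorter and more conceptual, and it transparently explains \emph{why} the slice is convex (the Gauss map dictates the turning); the paper's approach is more hands-on and yields the explicit speed $|c'|$ along the way, which is not needed here but could be useful for finer estimates. One small point worth making explicit in your write-up: the proportionality constant between $c'$ and $N\times E_3$ is continuous and nowhere zero (both vectors are nonvanishing), hence of constant sign, which is what justifies the ``up to a globally constant $0$ or $\pi$'' for the turning angle.
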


\begin{proof}[Proof]
This intersection is non-empty : $x(\lambda / \alpha + i0) \in \mathcal{C}_{\alpha} \cap \{ x_{3} = \lambda \}$. We look at the curve in $\mathbb{C}$ defined by $x_{3}(u+iv) = \alpha u + F(v) = \lambda$, i.e. the curve

$$c : t \in \mathbb{R} \longmapsto  \Big( \dfrac{\lambda - F(t)}{\alpha},t \Big).$$
Its image by $x$ is

$$\mathfrak{c} : t \in \mathbb{R} \longmapsto \begin{pmatrix} e^{-\lambda} \Big[ \dfrac{e^{\frac{\lambda - F}{\alpha} + \gamma}}{2(1-\alpha)}(\cos{(\rho)}F'-\alpha \sin{(\rho)}) - \dfrac{e^{-\frac{\lambda - F}{\alpha} - \gamma}}{2(1+\alpha)}(\alpha \sin{(\rho) + \cos{(\rho)}F'}) \Big] \\
 e^{\lambda} \Big[ \dfrac{-e^{\frac{\lambda - F}{\alpha} + \gamma}}{2(1+\alpha)}(\alpha \cos{(\rho)} + F'\sin{(\rho)}) + \dfrac{e^{-\frac{\lambda - F}{\alpha} - \gamma}}{2(\alpha-1)}(\alpha \cos{(\rho)} - F' \sin{(\rho)}) \Big] \\
 c \end{pmatrix}(t)$$
The calculation leads to

$$\mathfrak{c}_{1}'(t) = \dfrac{e^{-\lambda}}{\alpha (1-\alpha^{2})} \left[ A(t) \cosh{\left( \frac{\lambda - F}{\alpha} + \gamma \right)} + B(t) \sinh{\left( \frac{\lambda - F}{\alpha} + \gamma \right)} \right],$$
with

$$\left\{ \begin{array}{rcl}
 A & = & -F'^{2} \cos{(\rho)} + \alpha \gamma' F' \cos{(\rho)} - \alpha^{2} \rho' \cos{(\rho)} + \alpha^{2} F' \sin{(\rho)} - \alpha^{3} \gamma' \sin{(\rho)} + \alpha^{2} F'' \cos{(\rho)} \\
   &   & ~~~~~~~~~~ - \alpha^{2} F' \rho' \sin{(\rho)} \\
 B & = & \alpha F' \sin{(\rho)} - \alpha^{2} \gamma' \sin{(\rho)} + \alpha F'' \cos{(\rho)} - \alpha F' \rho' \sin{(\rho)} - \alpha F'^{2} \cos{(\rho)} \\
   &   & ~~~~~~~~~~ + \alpha^{2} \gamma' F' \cos{(\rho)} - \alpha^{3} \rho' \cos{(\rho)}
\end{array} \right.$$
We remark that $B \equiv 0$ after simplifications, and

$$A(t) = (F'^{2}(t) + \alpha^{2})(\alpha^{2}-1) \cos{(\rho(t))}.$$
Finally,

$$\mathfrak{c}_{1}'(t) = -\dfrac{e^{-\lambda}}{\alpha} (F'^{2}(t) + \alpha^{2}) \cos{(\rho(t))} \cosh{\left( \frac{\lambda - F(t)}{\alpha} + \gamma(t) \right)}.$$
In the same way, we get

$$\mathfrak{c}_{2}'(t) = -\dfrac{e^{-\lambda}}{\alpha} (F'^{2}(t) + \alpha^{2}) \sin{(\rho(t))} \cosh{\left( \frac{\lambda - F(t)}{\alpha} + \gamma(t) \right)}.$$
Thus

$$\mathfrak{c}_{1}'^{2} + \mathfrak{c}_{2}'^{2} = \dfrac{e^{-2\lambda}}{\alpha^{2}} (F'^{2}(t) + \alpha^{2})^{2} \cosh^{2}{\left( \frac{\lambda - F(t)}{\alpha} + \gamma(t) \right)} > 0.$$
so the intersection $\mathcal{C}_{\alpha} \cap \{ x_{3} = \lambda \}$ is a smooth curve ; moreover, it's closed since $\mathfrak{c}(t+2V) = \mathfrak{c}(t)$ for all $t \in \mathbb{R}$.

\vspace{1\baselineskip}

The planes $\{x_{3} = \lambda\}$ are flat : indeed, the metrics on these planes are $e^{2\lambda} dx_{1}^{2} + e^{-2\lambda} dx_{2}^{2}$, so up to an affine transformation, we can work in euclidean coordinates as we suppose to be in the proof since affinities preserve convexity.

To prove that $\mathfrak{c}$ is embedded and convex, we consider the part of $\mathfrak{c}$ corresponding to $t \in (-V/2 , V/2)$. On $(-V/2 , V/2)$, we have $\cos{(\rho(t))} > 0$ thanks to proposition \ref{propperiodecat} and corollary \ref{corollairerho}. So $\mathfrak{c}_{1}'(t) < 0$ if $\alpha > 0$ (and $\mathfrak{c}_{1}'(t) > 0$ if $\alpha < 0$) and $\mathfrak{c}_{1}$ is injective and decreasing if $\alpha > 0$ (and increasing if $\alpha < 0$). We get

$$\dfrac{d \mathfrak{c}_{2}}{d \mathfrak{c}_{1}} = \tan{(\rho(t))},$$
so $\frac{d \mathfrak{c}_{2}}{d \mathfrak{c}_{1}}$ is an increasing function of $t$, and also of $\mathfrak{c}_{1}$ if $\alpha < 0$ (and a decreasing function of the decreasing function $\mathfrak{c}_{1}$ if $\alpha > 0$). In both cases, the curve is convex.

Then, the half of $\mathfrak{c}$ corresponding to $t \in (-V/2,V/2)$ is convex and embedded. Since $\mathfrak{c}(t+V) = -\mathfrak{c}(t)$, the entire curve is convex and embedded.

\end{proof}

The following graphics show sections of the catenoid $\alpha = -0.6$ with planes $\{ x_{3} = \mbox{cte} \}$ :

\begin{figure}[H]
    \centering
        \includegraphics[height=6cm,width=9cm]{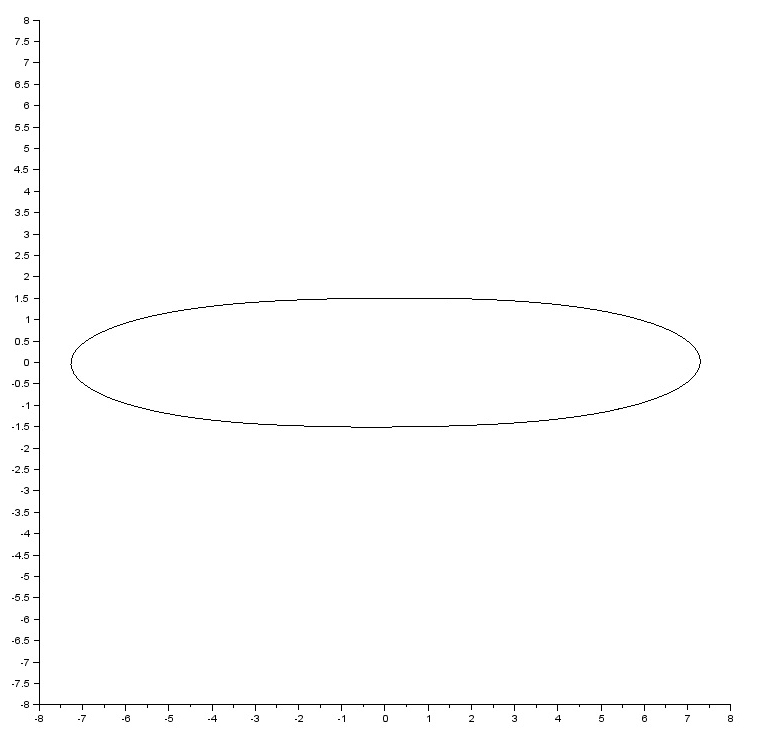}
    \caption{Section with $\{ x_{3} = -1 \}$ with Scilab®}
\end{figure}

\begin{figure}[H]
    \centering
        \includegraphics[height=6cm,width=9cm]{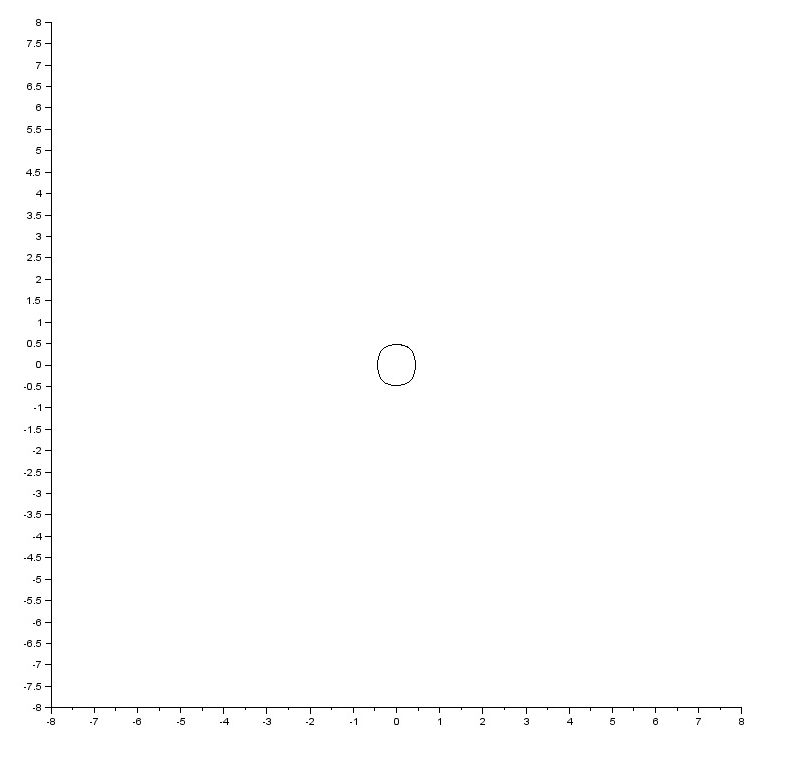}
    \caption{Section with $\{ x_{3} = 0 \}$ with Scilab®}
\end{figure}

\begin{figure}[H]
    \centering
        \includegraphics[height=6cm,width=9cm]{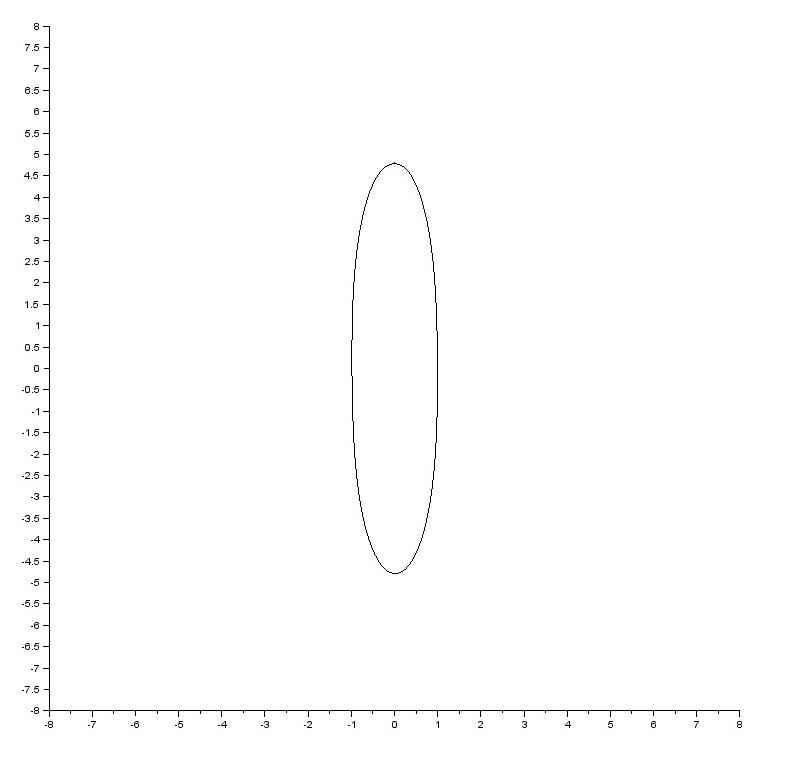}
    \caption{Section with $\{ x_{3} = 1 \}$ with Scilab®}
\end{figure}

\begin{cor}
 The surface $\mathcal{C}_{\alpha}$ is properly embedded for all $\alpha \in~]-1,1[~\backslash \{ 0 \}$.
\end{cor}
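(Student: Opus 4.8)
The plan is to leverage Proposition~\ref{courbeconvexe}, which already does the hard geometric work by exhibiting the intersection of $\mathcal{C}_{\alpha}$ with each horizontal plane as an embedded closed convex curve. The structural observation I would isolate first is that the third coordinate of the immersion, $x_{3}(u+iv) = \alpha u + F(v)$, has the function $F$ bounded: indeed $F$ is continuous and $V$-periodic by Proposition~\ref{propperiodeF}, hence attains a maximum and a minimum. Consequently the level sets $\{x_{3} = \lambda\}$ correspond in the $(u,v)$-plane to the graphs $u = (\lambda - F(v))/\alpha$, which are pairwise disjoint and foliate $\mathbb{C}$, and each meets a fundamental strip $v \in [0,2V)$ in a single arc. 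In particular, if $x(z_{1}) = x(z_{2})$, then the equality of third coordinates forces $z_{1}$ and $z_{2}$ to lie on the \emph{same} level curve.

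Next I would establish injectivity on the cylinder $\mathbb{C}/2Vi\mathbb{Z}$. By \eqref{eq14} the immersion $x$ factors through this cylinder. Fixing $\lambda$ and writing $c_{\lambda}(t) = \bigl((\lambda - F(t))/\alpha,\, t\bigr)$ for the level curve, the composition $x \circ c_{\lambda}$ is precisely the $2V$-periodic parametrization $\mathfrak{c}$ studied in the proof of Proposition~\ref{courbeconvexe}, where it is shown to be an embedding of a closed convex curve. Hence $\mathfrak{c}(t_{1}) = \mathfrak{c}(t_{2})$ forces $t_{1} \equiv t_{2} \pmod{2V}$, and since $F$ is $V$-periodic this is the same point of the cylinder. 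Combining this with the previous paragraph, $x(z_{1}) = x(z_{2})$ implies $z_{1} = z_{2}$ in $\mathbb{C}/2Vi\mathbb{Z}$, so $x$ descends to an injective immersion on the cylinder, which is moreover conformally equivalent to $\mathbb{C} \setminus \{0\}$.

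For properness I would argue directly: the preimage under $x$ of the slab $\{\,|x_{3}| \leq M\,\}$ in Sol$_{3}$ is contained in $\{\,|\alpha u| \leq M + \max|F|\,\}$, i.e.\ in $\{\,|u| \leq (M + \max|F|)/|\alpha|\,\}$, which is compact in the cylinder since the $v$-direction is already compact there. Thus the preimage of any compact subset of Sol$_{3}$ is compact, $x$ is proper, and a proper injective immersion is an embedding onto a closed subset. The only genuine obstacle is bookkeeping: one must check that the $V$-periodicity of $F$ and of $\gamma$ is truly compatible with the $2V$-periodicity of $x$, so that no spurious identifications are created and none are missed; but Propositions~\ref{propperiodecat}, \ref{propperiodeF} and \ref{periodgamma} supply exactly what is needed, and no computation beyond those already carried out is required.
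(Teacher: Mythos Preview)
Your argument is correct and is exactly the route the paper has in mind: the corollary is stated without proof, as an immediate consequence of Proposition~\ref{courbeconvexe}, and your write-up simply makes explicit the two points that justify the word ``corollary'' --- that the embedded level curves force injectivity of $x$ on the cylinder $\mathbb{C}/2iV\mathbb{Z}$, and that the boundedness of the periodic function $F$ forces properness via the third coordinate $x_{3} = \alpha u + F(v)$.
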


\begin{prop}
For all $\alpha \in~]-1,1[~\backslash \{ 0 \}$, the surface $\mathcal{C}_{\alpha}$ is conformally equivalent to $\mathbb{C}~\backslash \{ 0 \}$.
\end{prop}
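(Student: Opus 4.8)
The plan is to use that, by Theorem~\ref{theocatenoide}, the conformal minimal immersion $x$ is defined on the whole of $\mathbb{C}$ and satisfies $x(u+i(v+2V)) = x(u+iv)$; hence it factors through the quotient cylinder $C := \mathbb{C}/2iV\mathbb{Z}$. The induced metric computed above, $dw^{2} = \frac{2\alpha^{2}}{1+\rho'}\cosh^{2}(u+\gamma)\,|dz|^{2}$, never vanishes since $\alpha \neq 0$, so $x$ has no branch points and descends to a conformal immersion $\bar{x} : C \longrightarrow \mathrm{Sol}_{3}$ with image $\mathcal{C}_{\alpha}$. The Riemann surface $C$ is conformally equivalent to $\mathbb{C}~\backslash \{0\}$, the equivalence being realized by $z \longmapsto \exp(\pi z / V)$, which identifies exactly the translations in $2iV\mathbb{Z}$. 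Thus the whole statement reduces to showing that $\bar{x}$ is injective: once this is known, $\bar{x}$ is a bijective immersion onto the embedded surface $\mathcal{C}_{\alpha}$ (properly embedded by the previous corollary), hence a diffeomorphism, and being conformal it is a conformal equivalence; composing with the biholomorphism $C \cong \mathbb{C}~\backslash \{0\}$ finishes the proof.

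To prove injectivity of $\bar{x}$ I would foliate $\mathbb{C}$ by the level sets of $x_{3}$. Suppose $x(z) = x(w)$ with $z = u+iv$ and $w = u'+iv'$. Comparing third coordinates gives $\alpha u + F(v) = \alpha u' + F(v') =: \lambda$, so both $z$ and $w$ lie on the curve $c(t) = \big((\lambda - F(t))/\alpha,\,t\big)$ considered in the proof of Proposition~\ref{courbeconvexe}; write $z = c(v)$ and $w = c(v')$. Its image $\mathfrak{c} = x \circ c$ parametrizes $\mathcal{C}_{\alpha} \cap \{x_{3} = \lambda\}$, and Proposition~\ref{courbeconvexe} shows that $\mathfrak{c}$ is embedded and, since $F$ is $V$-periodic (Proposition~\ref{propperiodeF}) while $\gamma$, $\rho'$ are $V$-periodic and $\rho(\cdot+V) = \rho(\cdot)+\pi$, that $\mathfrak{c}(t+2V) = \mathfrak{c}(t)$; that is, $\mathfrak{c}$ is injective on $\mathbb{R}/2V\mathbb{Z}$. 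Hence $v \equiv v' \pmod{2V}$, and then the $2V$-periodicity of $F$ forces $(\lambda - F(v))/\alpha = (\lambda - F(v'))/\alpha$, i.e. $u = u'$; so $z \equiv w \pmod{2iV}$ and $\bar{x}$ is injective.

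The only genuinely nontrivial ingredient is this injectivity step, and essentially all of its content — the embeddedness of each horizontal slice $\mathcal{C}_{\alpha}\cap\{x_{3}=\lambda\}$ — has already been established in Proposition~\ref{courbeconvexe}; what remains is the bookkeeping that distinct slices are disjoint (clear, as $x_{3}$ takes different values) and that no slice gets wrapped onto itself by a period of $x$ smaller than $2iV$, which is exactly what the injectivity of $\mathfrak{c}$ on $\mathbb{R}/2V\mathbb{Z}$ provides. I would also record explicitly that $\bar{x}$ being a bijective immersion onto a two-dimensional embedded submanifold makes it a diffeomorphism (a local diffeomorphism onto $\mathcal{C}_{\alpha}$, hence open, hence a homeomorphism), so that the conformal structure induced on $\mathcal{C}_{\alpha}$ by the ambient metric coincides with that of $C$, and therefore with that of $\mathbb{C}~\backslash \{0\}$.
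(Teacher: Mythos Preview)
Your argument is correct and follows exactly the same route as the paper: the paper's proof is the single sentence ``The map $x : \mathbb{C}/(2iV\mathbb{Z}) \longrightarrow \mathcal{C}_{\alpha}$ is a conformal bijective parametrization of $\mathcal{C}_{\alpha}$,'' and you have simply unpacked each adjective, supplying in particular the injectivity via the horizontal slices of Proposition~\ref{courbeconvexe} and the identification $\mathbb{C}/(2iV\mathbb{Z}) \cong \mathbb{C}\setminus\{0\}$ via the exponential. Nothing is missing and nothing is done differently.
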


\begin{proof}[Proof]
The map $x : \mathbb{C} / (2iV \mathbb{Z}) \longrightarrow \mathcal{C}_{\alpha}$ is a conformal bijective parametrization of $\mathcal{C}_{\alpha}$.
\end{proof}

The vector field defined by

\begin{eqnarray*}
N & = & \dfrac{1}{\cosh{(u)}} \left[ \begin{matrix} e^{-\gamma} \sin{(\rho)} \\ -e^{-\gamma} \cos{(\rho)} \\ \sinh{(u)} \end{matrix} \right].
\end{eqnarray*}
is normal to the surface. 

\vspace{1\baselineskip}

We have

$$\begin{array}{rcl}
x(u+i(v+V)) & = & \begin{pmatrix} -x_{1}(u+iv) \\ -x_{2}(u+iv) \\ x_{3}(u+iv) \end{pmatrix} \\
			& = & \sigma^{2} \circ x(u+iv).
\end{array}$$
Thus, the surface $\mathcal{C}_{\alpha}$ is symmetric by rotation of angle $\pi$ around the $x_{3}$-axis.

\begin{rem}
The $x_{3}$-axis is contained in the "interior" of $\mathcal{C}_{\alpha}$ since each curve $\mathcal{C}_{\alpha} \cap \{ x_{3} = \lambda \}$ is convex and symmetric with respect to this axis.
\end{rem}

We get as well

$$\begin{array}{rcl}
x(u-iv) & = & \begin{pmatrix} -x_{1}(u+iv) \\ x_{2}(u+iv) \\ x_{3}(u+iv) \end{pmatrix} \\
		& = & \tau \circ x(u+iv),
\end{array}$$
and the surface $\mathcal{C}_{\alpha}$ is symmetric by reflection with respect to the plane $\{ x_{1} = 0 \}$, and finally we have too

$$x(u+i(-v+V)) = \sigma^{2} \tau \circ x(u+iv)$$
and $\mathcal{C}_{\alpha}$ is symmetric by reflection with respect to the plane $\{ x_{2} = 0 \}$.

\vspace{1\baselineskip}

If $\mathcal{C}_{\alpha}$ had another symmetry fixing the origin, it would have every symmetry of the isotropy group of Sol$_{3}$, and we prove as for the helicoid that it is impossible.

\begin{figure}[H]
    \centering
        \includegraphics[height=10cm,width=15cm]{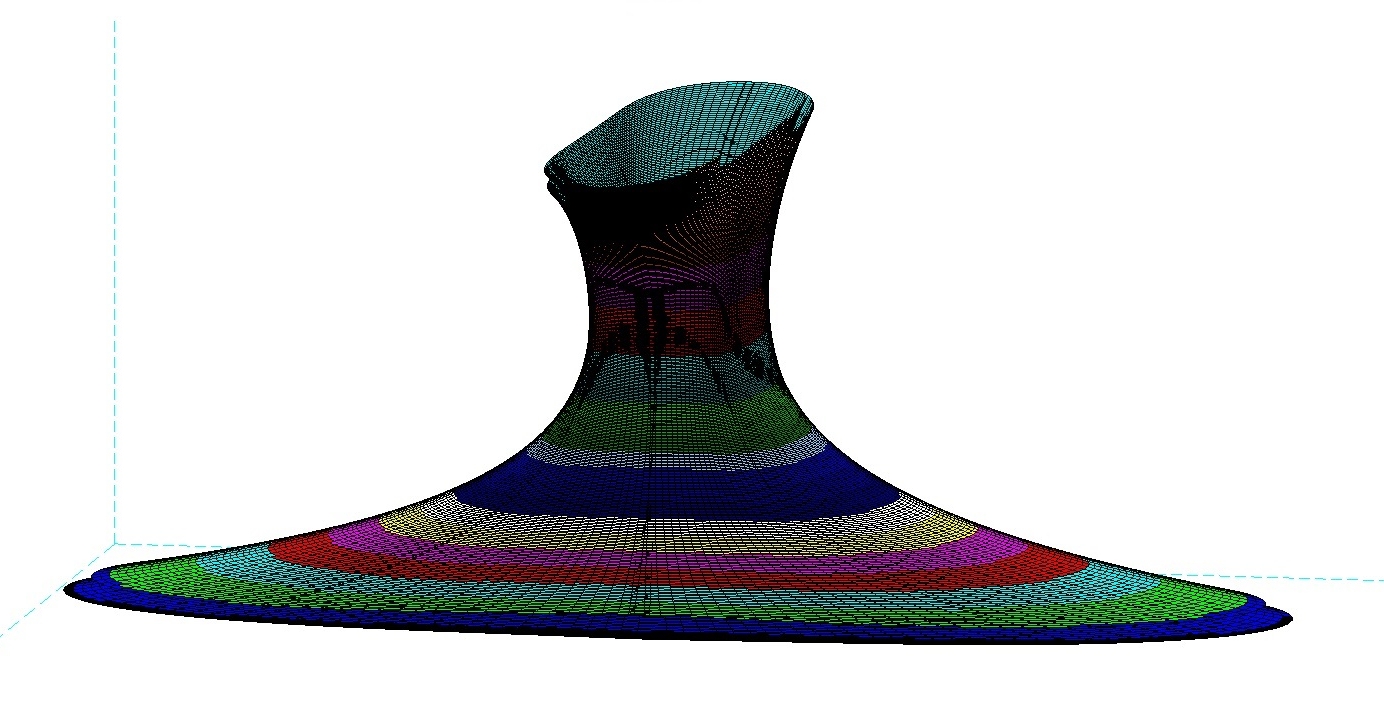}
    \caption{Catenoid for $\alpha = -0.6$ with Scilab®}
\end{figure}

\section{Limits of catenoids}

\subsection{The case $\alpha = 0$}

In this part we take a look to the limit surface of the catenoids $\mathcal{C}_{\alpha}$ when $\alpha$ goes to zero. For this, we do the change of parameters :

$$\left\{ \begin{array}{rcl}
u' & = & u + \ln{(\alpha)} \\
v' & = & v.
\end{array} \right.$$
In this coordinates, the immersion $x$ given in theorem \ref{theocatenoide} takes the form

$$\begin{pmatrix}
 e^{\alpha \ln{(\alpha)} - \alpha u'-F} \Big[ \dfrac{e^{u' + \gamma}}{2\alpha(1-\alpha)}(\cos{(\rho)}F'-\alpha \sin{(\rho)}) - \dfrac{\alpha e^{-u' - \gamma}}{2(1+\alpha)}(\alpha \sin{(\rho) + \cos{(\rho)}F'}) \Big] \\
 e^{- \alpha \ln{(\alpha)} + \alpha u'+F} \Big[ \dfrac{-e^{u' + \gamma}}{2\alpha(1+\alpha)}(\alpha \cos{(\rho)} + F'\sin{(\rho)}) + \dfrac{\alpha e^{-u' - \gamma}}{2(\alpha-1)}(\alpha \cos{(\rho)} - F' \sin{(\rho)}) \Big] \\
 - \alpha \ln{(\alpha)} + \alpha u' + F
\end{pmatrix}$$
Letting $\alpha$ go to zero, we get

$$\left\{ \begin{array}{rcl}
\rho & \longrightarrow & Id \\
F / \alpha & \longrightarrow & 0 \\
F' / \alpha & \longrightarrow & 0 \\
\gamma & \longrightarrow & 0
\end{array} \right.$$
and so the limit immersion is

$$\begin{pmatrix}
 -\dfrac{e^{u'}}{2} \sin{(v')} \\
 -\dfrac{e^{u'}}{2} \cos{(v')} \\
 0
\end{pmatrix}.$$
Thus, we obtain a parametrization of the plane $\{ x_{3} = 0 \}$, which is the limit of the family $(\mathcal{C}_{\alpha})$ when $\alpha \rightarrow 0$.

\subsection{The case $\alpha = 1$}

We close this paper by the study of the case $\alpha = 1$ (the case $\alpha = -1$ is exactly the same). We show that the limit surface is a minimal entire graph :

\begin{prop}
Let $\mathbf{x} : \mathbb{R}^{2} \longrightarrow \emph{Sol}_{3}$ be defined by

$$\mathbf{x}(u+iv) = \begin{pmatrix} \mathbf{x}_{1} \\ \mathbf{x}_{2} \\ \mathbf{x}_{3} \end{pmatrix} = \begin{pmatrix} - \dfrac{\tanh{(v)}}{2}(1 + e^{-2u}) \\
 \dfrac{e^{2u}}{4} - \dfrac{u}{2} - \dfrac{\cosh{(2v)}}{4} \\
 u + \ln{(\cosh{(v)})}
\end{pmatrix}.$$

Then $\mathbf{x}$ is a minimal immersion and there exists a $C^{\infty}$-function $f$ defined on the entire $\mathbb{R}^{2}$ such that the image of $x$ (called $\mathcal{S}$) is the $\mathbf{x}_{2}$-graph of $f$ given by $\mathbf{x}_{2} = f(\mathbf{x}_{1},\mathbf{x}_{3})$.
\end{prop}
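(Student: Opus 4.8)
The plan is to verify the proposition in three stages: (i) show $\mathbf{x}$ is a conformal minimal immersion by exhibiting it as a limit (up to reparametrization) of the catenoids $\mathcal{C}_\alpha$ as $\alpha \to 1$, or more directly by checking that its Gauss map satisfies \eqref{eq6} and that $\mathbf{x}$ is given by the representation formulas \eqref{eq7}; (ii) prove that $(\mathbf{x}_1,\mathbf{x}_3)$ is a diffeomorphism from $\mathbb{R}^2$ onto $\mathbb{R}^2$; (iii) conclude that $\mathbf{x}_2$ is then a smooth function of $(\mathbf{x}_1,\mathbf{x}_3)$, so $\mathcal{S}$ is an entire $\mathbf{x}_2$-graph.

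For step (i), the cleanest route is to produce the candidate Gauss map. Taking $\alpha \to 1$ in \eqref{eq11}--\eqref{eq12}, the equation $\rho' = \sqrt{1-\sin^2(2\rho)} = |\cos(2\rho)|$ degenerates, and after the change of parameter $u' = u + \ln\alpha$ used in the $\alpha=0$ analysis one expects a limiting $g$. I would simply guess, from the explicit formulas, that the Gauss map of $\mathbf{x}$ is
$$ g(u+iv) = -i\,e^{-u}\,\frac{1 - i\tanh v}{\text{(normalizing factor)}} $$
or some equally explicit elementary expression, and then verify \eqref{eq6} by a direct computation and check that $\mathbf{x}_z$ matches \eqref{eq7}; by the converse part of Theorem \ref{theoweierstrasssol3} this makes $\mathbf{x}$ a conformal (possibly branched) minimal immersion, and computing the induced metric $\|\mathbf{x}_u\|^2$ and showing it is nowhere zero rules out branch points. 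Concretely, one differentiates the three given coordinate functions, assembles $2(\mathbf{x}_z,\mathbf{x}_{\bar z})$ and $(\mathbf{x}_z,\mathbf{x}_z)$ using the metric \eqref{eq2} with $x_3 = u + \ln\cosh v$, checks conformality $(\mathbf{x}_z,\mathbf{x}_z)=0$ and $\lambda = \|\mathbf{x}_u\|^2 > 0$, and then either invokes the representation-formula characterization or computes the mean curvature directly from the connection table in Section 2.

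For step (ii), note $\mathbf{x}_3 = u + \ln\cosh v$ and $\mathbf{x}_1 = -\tfrac{1}{2}\tanh v\,(1+e^{-2u})$. I would fix $\mathbf{x}_3 = c$, so $u = c - \ln\cosh v$, substitute into $\mathbf{x}_1$ to get a one-variable function $v \mapsto \mathbf{x}_1$, and show it is a strictly monotone bijection of $\mathbb{R}$ onto $\mathbb{R}$: as $v$ ranges over $\mathbb{R}$, $\tanh v$ ranges over $(-1,1)$ while $e^{-2u} = e^{-2c}\cosh^2 v \to +\infty$, so the product $\tanh v \cdot e^{-2u}$ sweeps all of $\mathbb{R}$; monotonicity follows by differentiating in $v$ and checking the sign (the derivative of $\tanh v\,\cosh^2 v = \sinh v\cosh v = \tfrac12\sinh 2v$ is positive, and the $1+$ part contributes a term with the sign of $\operatorname{sech}^2 v>0$). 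This gives, for each $c$, a diffeomorphism $v \mapsto \mathbf{x}_1$; combined with the free parameter $c = \mathbf{x}_3$ this shows $(u,v)\mapsto(\mathbf{x}_1,\mathbf{x}_3)$ is a smooth bijection, and its Jacobian is nonzero by the same sign computation, so it is a diffeomorphism of $\mathbb{R}^2$.

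Step (iii) is then immediate: define $f = \mathbf{x}_2 \circ \Phi^{-1}$ where $\Phi(u,v) = (\mathbf{x}_1(u+iv),\mathbf{x}_3(u+iv))$; since $\Phi$ is a $C^\infty$ diffeomorphism of $\mathbb{R}^2$ and $\mathbf{x}_2$ is $C^\infty$, $f$ is $C^\infty$ on all of $\mathbb{R}^2$ and $\mathcal{S} = \mathbf{x}(\mathbb{R}^2)$ is exactly $\{\mathbf{x}_2 = f(\mathbf{x}_1,\mathbf{x}_3)\}$. The main obstacle I anticipate is step (ii): one must argue global injectivity and surjectivity of $(\mathbf{x}_1,\mathbf{x}_3)$ carefully rather than just local invertibility, and the asymptotic behavior of $\tanh v\cdot e^{-2c}\cosh^2 v$ at $v\to\pm\infty$ has to be controlled to get surjectivity onto all of $\mathbb{R}$ for every level $c$; the minimality verification in step (i), while lengthy, is a routine (if tedious) direct computation given Theorem \ref{theoweierstrasssol3}.
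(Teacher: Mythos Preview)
Your proposal is correct and follows essentially the same route as the paper. The paper obtains the Gauss map you are ``guessing'' by solving the $\alpha=1$ ODEs explicitly, getting $\rho(v)=\arctan(\tanh v)$, $\gamma(v)=-\tfrac12\ln\cosh(2v)$, $F(v)=\ln\cosh v$, and hence $g(u+iv)=-ie^{-u}(\cosh v+i\sinh v)$; it then carries out exactly your level-set argument, fixing $\mathbf{x}_3=\lambda$ (so $u=\lambda-\ln\cosh v$) and computing $\dfrac{d\mathbf{x}_1}{dv}=\dfrac{\tanh^2 v-1}{2}-\dfrac{e^{-2\lambda}}{2}(\cosh^2 v+\sinh^2 v)<0$, which is the monotonicity you sketch. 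Your packaging of steps (ii)--(iii) via the global diffeomorphism $\Phi=(\mathbf{x}_1,\mathbf{x}_3)$ and $f=\mathbf{x}_2\circ\Phi^{-1}$ is slightly cleaner than the paper's, which instead checks $\langle N,\partial_2\rangle\neq 0$ to get local $C^\infty$ graphs and then patches them together using the section analysis; but the two arguments are equivalent, since the nonvanishing of that normal component is exactly the nonvanishing of your Jacobian.
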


\begin{proof}
We show that this surface is (up to a translation) the limit surface of the family $(\mathcal{C}_{\alpha})_{\alpha \in ]-1,1[}$ when $\alpha$ goes to $1$. For $\alpha = 1$, the Gauss map is still given by

$$g(z=u+iv) = -ie^{-u-\gamma(v)} e^{i\rho(v)},$$
but $\rho$ satisfies the following ODE :

\begin{eqnarray}
\rho' & = & \cos{(2\rho)},~~\rho(0) = 0~, \label{eq15}
\end{eqnarray}
and $\gamma$ is still defined by

\begin{eqnarray}
\gamma' & = & - \sin{(2 \rho)},~~\gamma(0) = 0. \label{eq16}
\end{eqnarray}
We have explicit expressions for these functions, which are given by

\begin{eqnarray*}
\rho(v) & = & \arctan{(e^{2v})} - \pi / 4 = \arctan{(\tanh{(v)})} \\
\gamma(v) & = & -\dfrac{1}{2} \ln{(\cosh{(2v)})}.
\end{eqnarray*}
Thus by setting

$$F(v) = \int_{0}^{v} \dfrac{\sin{(2\rho(s))}}{1+\cos{(2 \rho(s))}} ds,$$
we obtain 

$$F(v) = \ln{(\cosh{(v)})}.$$
Then, the immersion $x$ is given by

$$x = \begin{pmatrix}
 -\dfrac{e^{-2u}}{2} \tanh{(v)} + \dfrac{e^{-v}}{2\cosh{(v)}} \\
 \dfrac{e^{2u}}{4} - \dfrac{u}{2} - \dfrac{\cosh{(2v)}}{4} \\
 u + \ln{(\cosh{(v)})}
\end{pmatrix}.$$
A unit normal vector field is given by

\begin{eqnarray*}
N & = & \dfrac{1}{1+e^{-2u} \cosh{(2v)}} \left[ \begin{matrix}
 2\sinh{(v)} e^{-u} \\
 -2\cosh{(v)} e^{-u} \\
 1-e^{-2u} \cosh{(2v)}
\end{matrix} \right] \\
  & = & \dfrac{1}{\cosh^{2}{(v)}\cosh{(u)} - \sinh^{2}{(v)} \sinh{(u)}} \left[ \begin{matrix}
 \sinh{(v)} e^{-u} \\
 -\cosh{(v)} e^{-u} \\
 \cosh^{2}{(v)}\sinh{(u)} - \sinh^{2}{(v)} \cosh{(u)}
\end{matrix} \right].
\end{eqnarray*}
Thus, we get

$$g(u+iv) = -ie^{-u}(\cosh{(v)} + i\sinh{(v)})$$
who satisfies the harmonic equation \eqref{eq6}. The metric induced by this immersion on the surface is

$$ds^{2} = (e^{-4u} \tanh^{2}{(v)} + e^{2u} \sinh^{2}{(u)} + 1) |dz|^{2}.$$

This surface is symmetric by reflection with respect to the plane $\{ x_{1} = 1/2 \}$ since

$$x(u+iv) = \begin{pmatrix}
 \dfrac{1}{2} - \dfrac{\tanh{(v)}}{2}(1 + e^{-2u}) \\
 \dfrac{e^{2u}}{4} - \dfrac{u}{2} - \dfrac{\cosh{(2v)}}{4} \\
 u + \ln{(\cosh{(v)})}
\end{pmatrix} = \begin{pmatrix}
 \dfrac{1}{2} +\widetilde{x_{1}}(u,v) \\
 x_{2}(u,v) \\
 x_{3}(u,v) 
\end{pmatrix},$$
and so

$$x(u-iv) = \begin{pmatrix}
 \dfrac{1}{2} - \widetilde{x_{1}}(u,v) \\
 x_{2}(u,v) \\
 x_{3}(u,v) 
\end{pmatrix}.$$
This property is equivalent to the property that the translated surface $(-1/2,0,0) \ast x(u+iv)$ is symmetric with respect to $\{ x_{1} = 0 \}$. This translated surface is the image of the immersion $\mathbf{x}$ defined by

$$\mathbf{x}(u+iv) = (-1/2,0,0) \ast x(u+iv) = \begin{pmatrix}
 - \dfrac{\tanh{(v)}}{2}(1 + e^{-2u}) \\
 \dfrac{e^{2u}}{4} - \dfrac{u}{2} - \dfrac{\cosh{(2v)}}{4} \\
 u + \ln{(\cosh{(v)})}
\end{pmatrix}.$$
Then, this surface is analytic (like any minimal surface in Sol$_{3}$), so it is a local analytic $\mathbf{x}_{2}$-graph around every point where $\partial_{2}$ doesn't belong to the tangent plane, i.e. $\left\langle N,\partial_{2} \right\rangle \neq 0$. But

$$\left\langle N,\partial_{2} \right\rangle = 0 \Longleftrightarrow \cosh{(v)} e^{-u} = 0,$$
which is impossible. Thus, $\mathcal{S}$ is a local analytic $\mathbf{x}_{2}$-graph around every point. Then, let's take a look to sections of the surface $\mathcal{S}$ by planes $\{ \mathbf{x}_{3} = \mbox{cte} \}$ : on the plane $\{ \mathbf{x}_{3} = \lambda \}$, we get the curve

$$c_{\lambda}(t) = \begin{pmatrix}
-\dfrac{\tanh{(t)}}{2} \left( 1+e^{-2\lambda} \cosh^{2}{(t)} \right) \\
\dfrac{e^{2 \lambda}}{4 \cosh^{2}{(t)}} - \dfrac{\lambda}{2} + \dfrac{\ln{(\cosh{(t)})}}{2} - \dfrac{\cosh{(2t)}}{4}
\end{pmatrix} := \begin{pmatrix}
{\mathbf{x}_{1}}_{\lambda}(t) \\
{\mathbf{x}_{2}}_{\lambda}(t)
\end{pmatrix}.$$

\begin{figure}[H]
    \centering
        \includegraphics[height=6cm,width=9cm]{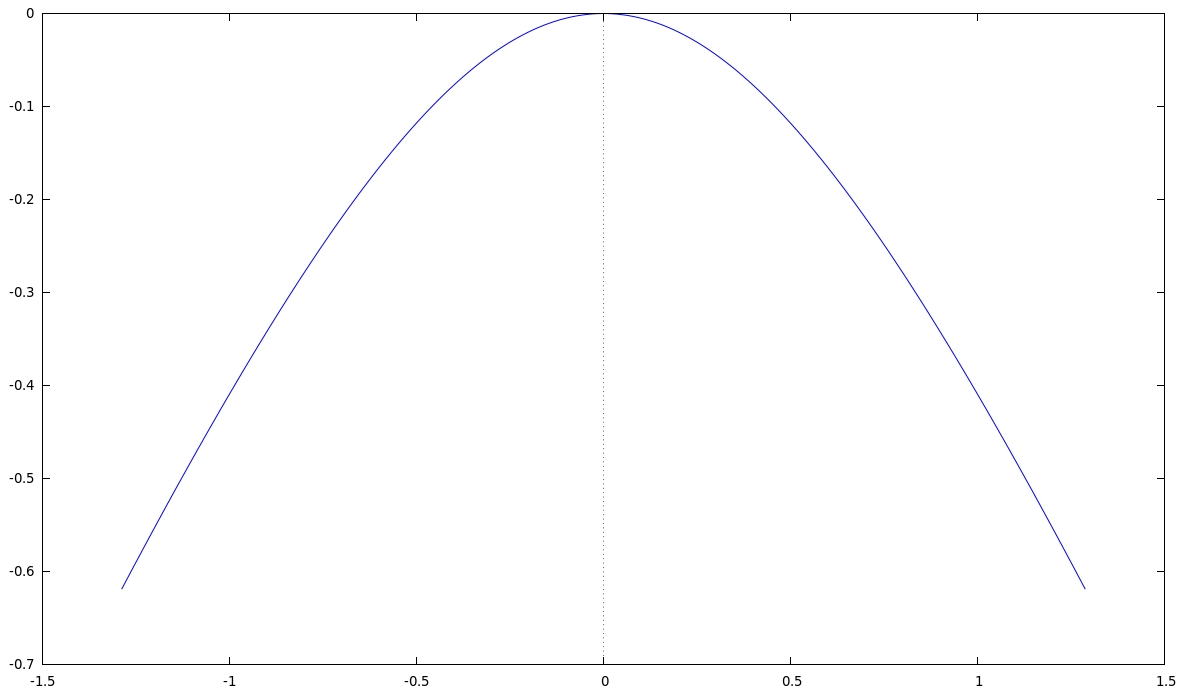}
    \caption{Section with $\{ \mathbf{x}_{3} = 0 \}$ with Maxima®}
\end{figure}

\begin{figure}[H]
    \centering
        \includegraphics[height=6cm,width=9cm]{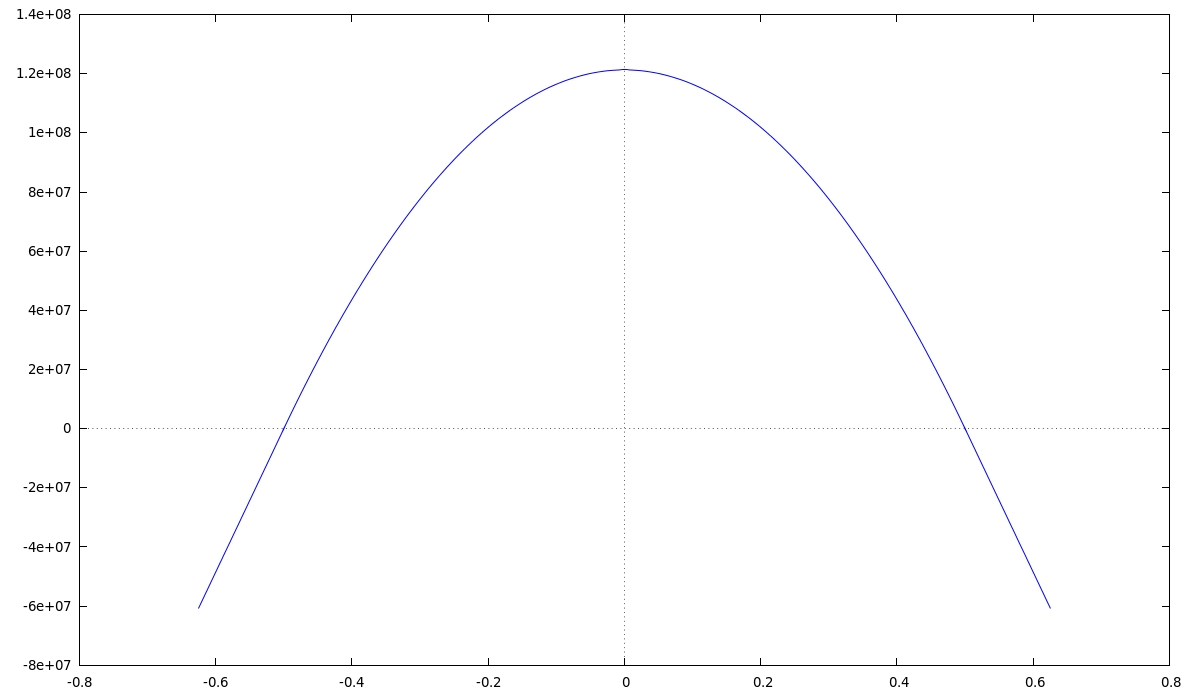}
    \caption{Section with $\{ \mathbf{x}_{3} = 10 \}$ with Maxima®}
\end{figure}

\begin{figure}[H]
    \centering
        \includegraphics[height=6cm,width=9cm]{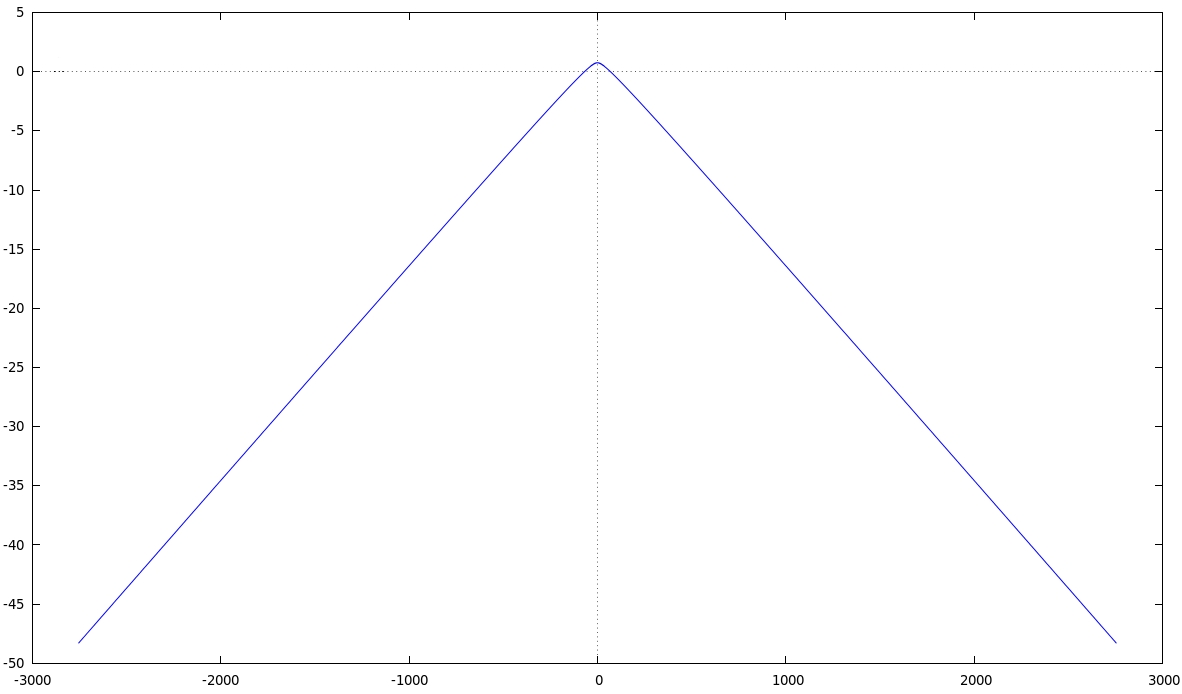}
    \caption{Section with $\{ \mathbf{x}_{3} = -2 \}$ with Maxima®}
\end{figure}

Then,

$${\mathbf{x}_{1}}_{\lambda}'(t) = \dfrac{\tanh^{2}{(t)}-1}{2} - \dfrac{e^{-2\lambda}}{2}(\cosh^{2}{(t)} + \sinh^{2}{(t)} ) < 0$$
for all $t \in \mathbb{R}$. Thus, the curves are injective, so the surface $\mathcal{S}$ is embedded. Moreover, by the implicit function theorem, we deduce that for every $\lambda \in \mathbb{R}$, there exists a function $f_{\lambda}$ such that ${\mathbf{x}_{2}}_{\lambda} = f_{\lambda}({\mathbf{x}_{1}}_{\lambda})$. Because the function ${\mathbf{x}_{1}}_{\lambda}$ is a decreasing diffeomorphism of $\mathbb{R}$, the function $f_{\lambda}$ is defined on the whole $\mathbb{R}$. This calculus is valid for all $\lambda \in \mathbb{R}$, so there exists a function $f : \mathbb{R}^{2} \longrightarrow \mathbb{R}$ such that $\mathbf{x}_{2} = f(\mathbf{x}_{1},\mathbf{x}_{3})$.

Finally, this function $f$ coincides around every point with the local $C^{\infty}$-functions which give the local graphs, and so $f$ is $C^{\infty}$.

\end{proof}

As a conclusion, we can notice that, for a fixed $\mathbf{x}_{3}$ :

\begin{itemize}
\item[\textbullet] When $\mathbf{x}_{1} \longrightarrow +\infty$, $\mathbf{x}_{2} \approx - \mathbf{x}_{1} e^{2 \mathbf{x}_{3}}$ ;
\item[\textbullet] When $\mathbf{x}_{1} \longrightarrow -\infty$, $\mathbf{x}_{2} \approx \mathbf{x}_{1} e^{2 \mathbf{x}_{3}}$.
\end{itemize}

\begin{figure}[H]
    \centering
        \includegraphics[height=10cm,width=15cm]{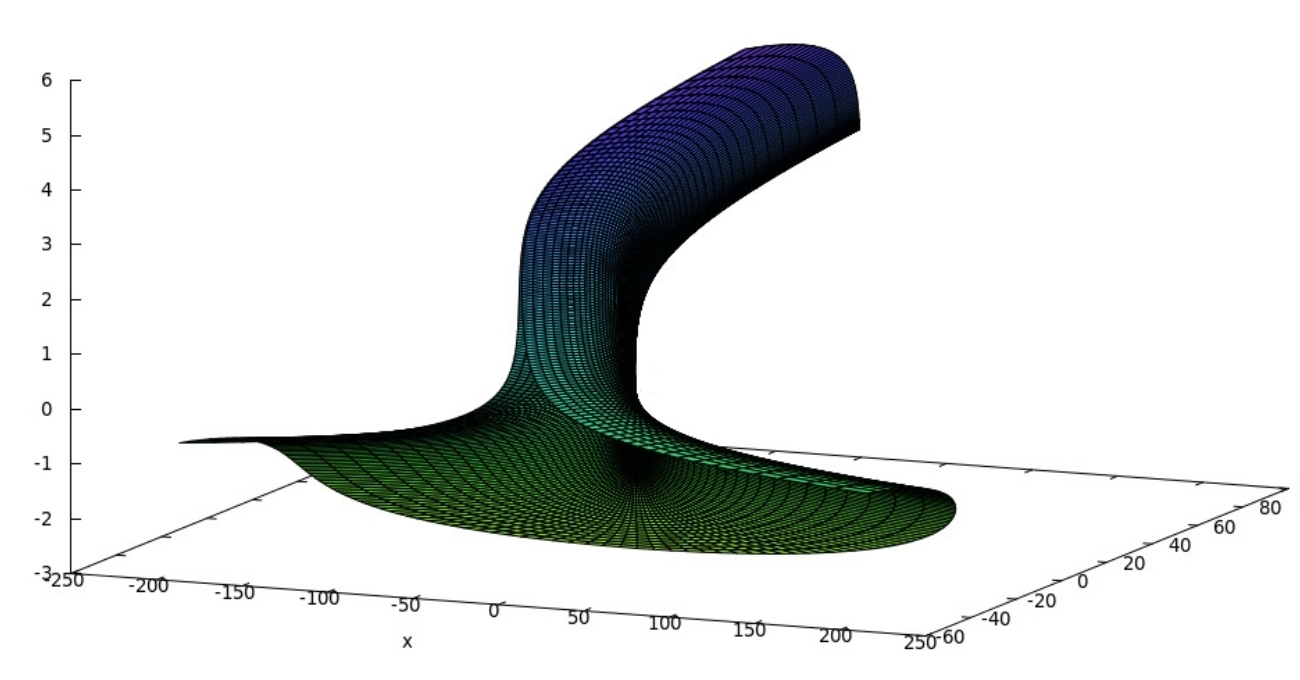}
    \caption{The surface $\mathcal{S}$ with Maxima®}
\end{figure}

\bibliographystyle{plain}
\bibliography{Biblio}
\addcontentsline{toc}{section}{References}

\textsc{Institut Elie Cartan de Lorraine, Université de Lorraine, Nancy, France.}

\textit{Current address :} Institut Elie Cartan de Lorraine, Université de Lorraine, B.P. 70239, 54506 Vandoeuvre-lès-Nancy Cedex, France

\textit{E-mail address :} christophe.desmonts@univ-lorraine.fr

\end{document}